\newtheorem{theorem}{Theorem}
\newtheorem*{theorem*}{Theorem}
\newtheorem*{proposition*}{Proposition}
\newtheorem{proposition}{Proposition}
\newtheorem{cortoprop}[proposition]{Corollary}
\newtheorem*{corollary*}{Corollary}
\newtheorem{corolemma}[proposition]{Lemma}
\newtheorem*{definition*}{Definition}
\newtheorem*{lemma*}{Lemma}
\newtheorem{lemma}[proposition]{Lemma}
\theoremstyle{remark}
\newtheorem*{remark}{Remark}
\numberwithin{proposition}{section}
\numberwithin{equation}{section}
\begin{document}

\title{The deformation theory of sheaves of commutative rings II}
\date{\today}
\author{Jonathan Wise}
\email{jonathan@math.ubc.ca}
\thanks{Research partially supported by NSF-MSPRF 0802951}
\address{Department of Mathematics,
The University of British Columbia,
Room 121, 1984 Mathematics Road,
Vancouver, B.C.,
Canada V6T 1Z2}

\begin{abstract}
  In this sequel to \cite{def-rings}, we compare the obstruction classes defined there to those defined by Illusie.  We also give sheaf theoretic proofs of the standard properties of the cotangent complex.
\end{abstract}

\maketitle

\section{Introduction}
\label{sec:intro}

Following a line of thought initiated by Quillen~\cite{Q} and Gaitsgory~\cite{Gaits}, we showed in~\cite{def-rings} that obstructions to infinitesimal deformations of sheaves of commutative rings and their homomorphisms arise as the classes of torsors and banded gerbes naturally associated to those problems.  We also showed that the obstruction \emph{groups}, in which the obstructions lie, coincide with those defined by Illusie~\cite{Illusie}.  However, we left for the present paper the question of whether the obstruction \emph{classes} defined in~\cite{def-rings} agree with those defined in~\cite{Illusie}.  We shall answer this question in the affirmative in Section~\ref{sec:obs}.

Illusie's definition of the obstruction classes relies on the connecting homomorphism in the \emph{transitivity sequence} \cite[II.2.1]{Illusie} associated to a sequence of ring homomorphisms $A \rightarrow B \rightarrow C$.  Our comparison relies on an interpretation of the connecting homomorphisms, in terms of torsors and banded gerbes, that is proved in Section~\ref{sec:transitivity}.  Along the way, we obtain a proof of the exactness of the transitivity sequence that does not rely on simplicial homotopy theory.

Having studied one of the two fundamental properties of the cotangent complex without simplicial homotopy theory, we could hardly stop without considering the base change theorem \cite[II.2.2]{Illusie}.  In Section~\ref{sec:base-change}, we give a proof of this theorem using the \v{C}ech spectral sequence \cite[V.3.3]{sga4-2}.  (It is a matter of opinion whether on the convergence of the \v{C}ech spectral sequence can qualify as avoiding simplicial homotopy theory; at least from a certain point of view it may be simpler than the monadic approach employed by Illusie \cite[I.5]{Illusie}.)

I am grateful to Dan Abramovich for catching several mistakes and suggesting a number of clarifications.

\section{Notation and review of \cite{def-rings}}
\label{sec:notation}

All rings and algebras will be assumed commutative and unital.

Suppose that $A$ is a sheaf of rings on a topos $E$ and $B$ is an $A$-algebra.  In \cite{def-rings}, we defined a site $A\uAlg(E)/B$ (or $A\uAlg/B$ when the dependence on $E$ is clear) whose objects are pairs $(U, C)$ where $U$ is an object of $E$ and $C$ is an $A_U$-algebra with a map to $B_U$.  This site is equipped with a topology \cite[Section~5]{def-rings} in which all algebras are covered by finitely generated polynomial algebras over $A$.  Since the standard deformation problems associated to rings are trivial for polynomial algebras, this topology enables one to say that the standard deformation problems are locally trivial; since solutions to the standard deformation problems can be glued in this topology, it follows that obstructions to the existence of global solutions arise from the failure of local solutions to glue \cite[Sections~6 and~7]{def-rings}.  In short, solutions form torsors and gerbes and the cohomology classes of these gerbes serve as obstructions.

If $J$ is a $B$-algebra, the sheaf $\uDer_A(B, J)$ plays a role of prime importance in the theory.  Its sections over $(U, C)$ are $A_U$-derivations from $C$ to $J_U$, where $J_U$ is given the $C$-module structure inherited from the structural morphism $C \rightarrow B_U$.  Note that $\uDer_A(B, J)$ is represented by the $A$-algebra $B + J$, the trivial square-zero extension of $B$ with ideal $J$.

In this paper, we will need to construct a number of morphisms of sites and topoi, and we will make some abuses of language systematically.  If $u^\ast : E \rightarrow E'$ is a left exact continuous functor between sites, then it induces a morphism of sites $u : E' \rightarrow E$.  We will reuse the symbol $u^\ast$ to denote the pullback functor on the categories of sheaves.  Its right adjoint will always be denoted $u_\ast$, and, should one exist, its left adjoint will be $u_!$.

\section{The transitivity sequence}
\label{sec:transitivity}

Define adjoint functors
\begin{alignat*}{3}
  \pi&_!  &&: A \uAlg(E)/B \rightarrow E : (U,C) \mapsto U\\
  \pi&^\ast&&  : E \rightarrow A\uAlg(E)/B : U \mapsto (U, B_U) 
\end{alignat*}
with $\pi_!$ left adjoint to $\pi^\ast$.  Because $\pi^\ast$ is left exact and continuous (or because $\pi_!$ is cocontinuous) these determine a morphism of sites
\begin{equation*}
  \pi : A\uAlg(E)/B \rightarrow E .
\end{equation*}
When it is necessary to emphasize the dependence of $\pi$ on $A$ and $B$, we shall write $\pi^{B/A}$ in lieu of $\pi$.

The purpose of this section is to prove the following theorem and relate it to \cite[Proposition~2.1.2]{Illusie}.
\begin{theorem} \label{thm:trans}
  Suppose that $A \rightarrow B \rightarrow C$ is a sequence of homomorphisms of sheaves of algebras and $J$ is a $C$-module.  Then there is an exact triangle in the derived category of sheaves of $C$-modules,
  \begin{equation*}
    R \pi^{C/B}_\ast \uDer_B(C,J) \rightarrow R \pi^{C/A}_\ast \uDer_A(C,J) \rightarrow R \pi^{B/A}_\ast \uDer_A(B,J) .
  \end{equation*}
\end{theorem}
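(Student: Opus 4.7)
The plan is to exhibit a short exact sequence of sheaves on the site $B\uAlg(E)/C$ and take its derived pushforward along $\pi^{C/B}$. I begin by introducing the auxiliary morphism of sites $\alpha : A\uAlg(E)/C \to B\uAlg(E)/C$ coming from the forgetful functor $\alpha^{\ast} : B\uAlg(E)/C \to A\uAlg(E)/C$, $(U, B_U \to R \to C_U) \mapsto (U, A_U \to R \to C_U)$, which is left exact and continuous. By construction $\pi^{C/A} = \pi^{C/B} \circ \alpha$, so $R\pi^{C/A}_{\ast} = R\pi^{C/B}_{\ast} \circ R\alpha_{\ast}$. On sites, $\alpha^{\ast}$ also admits a left adjoint $(U, R) \mapsto (U, R \otimes_{A_U} B_U)$, which will help in controlling the higher direct images of $\alpha_{\ast}$.

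Over every $(U, B_U \to R \to C_U)$ the classical restriction sequence
\begin{equation*}
0 \to \Der_{B_U}(R, J_U) \to \Der_{A_U}(R, J_U) \to \Der_{A_U}(B_U, J_U)
\end{equation*}
is left exact, and it becomes right exact whenever $R$ is a polynomial $B_U$-algebra, since an $A_U$-derivation $B_U \to J_U$ extends to $B_U[x_1, \ldots, x_n]$ by sending $x_i \mapsto 0$. Since the topology of \cite{def-rings} covers every object by polynomial $B$-algebras, sheafifying yields an exact sequence
\begin{equation*}
0 \to \uDer_B(C, J) \to \alpha_{\ast} \uDer_A(C, J) \to \mathcal{F} \to 0
\end{equation*}
of sheaves on $B\uAlg(E)/C$, where $\mathcal{F}$ is the cokernel.

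Applying $R\pi^{C/B}_{\ast}$ produces an exact triangle and the remaining work lies in identifying its three terms. The leftmost is already $R\pi^{C/B}_{\ast} \uDer_B(C, J)$. The middle becomes $R\pi^{C/A}_{\ast} \uDer_A(C, J)$ as soon as $R^i \alpha_{\ast} \uDer_A(C, J)$ vanishes for $i > 0$, which I would verify using the left adjoint to $\alpha^{\ast}$ together with a direct check on the polynomial cover. The hardest step is to identify $R\pi^{C/B}_{\ast} \mathcal{F}$ with $R\pi^{B/A}_{\ast} \uDer_A(B, J)$; although $\mathcal{F}$ agrees with the presheaf $(U, R) \mapsto \Der_{A_U}(B_U, J_U)$ on polynomial covers, its derived pushforward along $\pi^{C/B}$ must recover the \emph{full} derived derivations of $B/A$, not merely the ordinary ones. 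The plan is to factor $\pi^{B/A}$ through $\pi^{C/B}$ by a morphism of sites $B\uAlg(E)/C \to A\uAlg(E)/B$ under which $\mathcal{F}$ is the pullback of $\uDer_A(B, J)$, and then apply Leray. The main obstacle is the construction of this auxiliary morphism, since the most natural candidates (for example $(U, R) \mapsto (U, R \times_{C_U} B_U)$) involve fiber products that may not commute with the tensor-product limits in $A\uAlg(E)/B$; one may have to argue instead via cocontinuous functors or by comparing \v{C}ech computations on polynomial covers on both sites.
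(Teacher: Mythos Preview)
Your step~2 is where the approach breaks down: the higher direct images $R^{i}\alpha_{\ast}\uDer_{A}(C,J)$ do \emph{not} vanish for $i>0$. After localizing on $B\uAlg/C$ you may assume $R=B_{U}[S]$, and then the stalk of $R^{i}\alpha_{\ast}\uDer_{A}(C,J)$ at $(U,R)$ is $H^{i}\bigl(A_{U}\uAlg/R,\uDer_{A_{U}}(R,J_{U})\bigr)$, an $\Ext^{i}$ against $\bL_{R/A_{U}}$; since $R$ is polynomial over $B$ but not over $A$, this inherits all of the complexity of $\bL_{B/A}$. The test case $C=B$ makes this vivid: the final object of $B\uAlg/B$ has only covers pulled back from $E$, so $R\pi^{C/B}_{\ast}$ of \emph{any} sheaf is concentrated in degree~$0$, and your middle term collapses to $\Der_{A}(B,J)$ in degree~$0$ rather than the full $R\pi^{B/A}_{\ast}\uDer_{A}(B,J)$. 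The left adjoint $(-)\otimes_{A}B$ to $\alpha^{\ast}$ on sites does not rescue this, because $\alpha^{\ast}$ is not cocontinuous (a covering sieve in $A\uAlg/C$ generated by polynomial $A$-algebras need contain no $B$-algebra at all), so it does not yield an exact $\alpha_{!}$ on sheaves; and in any case an exact $\alpha_{!}$ would control $\alpha^{\ast}$, not $\alpha_{\ast}$.

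The obstacle you name in step~3 is equally real: there is no usable morphism of sites $B\uAlg/C\to A\uAlg/B$, and the paper does not construct one either. Its solution to both problems at once is to introduce an auxiliary site $A\uAlg/BC$ whose objects are commutative squares $B'\to C'$ over $B\to C$ of $A$-algebras. This site carries morphisms to each of $A\uAlg/B$, $A\uAlg/C$, and $B\uAlg/C$; the pullbacks from the first two have exact left adjoints, so they preserve cohomology, while the comparison with $B\uAlg/C$ is handled by a separate acyclicity argument (Proposition~\ref{prop:gamma}). The three $\uDer$ sheaves then pull back to a genuine short exact sequence $0\to\uDer_{A}(BC,0J)\to\uDer_{A}(BC,JJ)\to\uDer_{A}(BC,J0)\to 0$ on $A\uAlg/BC$, and pushing it down to $E$ gives the triangle. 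The moral is to replace your lossy pushforward $\alpha_{\ast}$ by lossless pullbacks to a common roof.
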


\subsection{Constant sheaves}

Notice that the functor $\pi_! : A\uAlg/B \rightarrow E$ is exact, so its extension to sheaves is exact as well (by \cite[Proposition~I.5.4~4]{sga4-1}, coupled with the exactness of sheafification), so $\pi^\ast$ preserves injectives.  From this we deduce 
\begin{proposition} \label{prop:const-acyclic}
  For any sheaf of abelian groups $J$ on $E$, the map $J \rightarrow R \pi_\ast \pi^\ast J$ is a quasi-isomorphism.
\end{proposition}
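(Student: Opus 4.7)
The plan is to reduce the statement to the elementary identity $\pi_\ast \pi^\ast = \id$ on abelian sheaves on $E$. Since $\pi^\ast$ is exact and preserves injectives (the observation established just before the proposition), any injective resolution $J \rightarrow I^\bullet$ is carried by $\pi^\ast$ to an injective resolution of $\pi^\ast J$. Hence $R\pi_\ast \pi^\ast J$ is computed by the complex $\pi_\ast \pi^\ast I^\bullet$, which, once the claimed identity is established, equals $I^\bullet$ termwise and is therefore quasi-isomorphic to $J$.

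To prove $\pi_\ast \pi^\ast F = F$ for any abelian sheaf $F$ on $E$, I would compute each functor directly. By construction of the morphism of sites, $\pi_\ast G(U) = G(\pi^\ast U) = G(U, B_U)$ for any sheaf $G$ on $A\uAlg/B$. For the pullback I would exploit the further adjunction $\pi_! \dashv \pi^\ast$: applying Yoneda,
\[
  \pi^\ast F(U, C) = \mathrm{Hom}\bigl(h_{(U,C)},\, \pi^\ast F\bigr) = \mathrm{Hom}\bigl(\pi_! h_{(U,C)},\, F\bigr) = \mathrm{Hom}(h_U, F) = F(U),
\]
since $\pi_!$ extends the site-level assignment $(U, C) \mapsto U$ and so carries the representable $h_{(U,C)}$ to $h_U$. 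In particular $\pi^\ast F(U, C)$ is independent of $C$, and specializing $C = B_U$ yields $\pi_\ast \pi^\ast F(U) = F(U)$, canonically identified with the unit of the adjunction.

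The only delicate point is the identification $\pi_! h_{(U,C)} = h_U$, which amounts to saying that the site-level adjunction $\pi_! \dashv \pi^\ast$ extends to representable sheaves in the expected way; this is essentially a repackaging of the exactness of $\pi_!$ and of sheafification cited just above. I do not anticipate any substantive obstacle beyond the routine bookkeeping of the three adjoints $\pi_! \dashv \pi^\ast \dashv \pi_\ast$.
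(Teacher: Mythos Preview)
Your proposal is correct and follows essentially the same approach as the paper: reduce to $\pi_\ast\pi^\ast = \id$ via the fact that $\pi^\ast$ preserves injectives, then verify the identity using the adjunction $\pi_! \dashv \pi^\ast$ on representables. The paper's computation $\Gamma(U,\pi_\ast\pi^\ast J)=\Gamma(\pi^\ast U,\pi^\ast J)=\Gamma(\pi_!\pi^\ast U,J)=\Gamma(U,J)$ is exactly your argument specialized to the object $\pi^\ast U=(U,B_U)$; you simply compute $\pi^\ast F$ on an arbitrary $(U,C)$ first, which is a harmless extra generality.
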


Since $\pi^\ast$ preserves injectives, it is sufficient to demonstrate that $J \rightarrow \pi_\ast \pi^\ast J$ is an isomorphism.  For this, note that
\begin{equation*}
  \Gamma(U, \pi_\ast \pi^\ast J) = \Gamma(\pi^\ast U, \pi^\ast J) = \Gamma(\pi_! \pi^\ast U, J) = \Gamma(U, J) 
\end{equation*}
since $\pi_! \pi^\ast U = U$.  \qed

\subsection{An intermediate site}

Suppose that $A \rightarrow B \rightarrow C$ is a sequence of homomorphisms of rings of $E$.  If $J$ is a $C$-module, then the homomorphism $B \rightarrow C$ also gives it the structure of a $B$-module.  We may therefore define sheaves
\begin{alignat*}{2}
  &\uDer_A(B,J)  & \quad &\text{on $A\uAlg/B$,} \\
  &\uDer_A(C,J)  & &\text{on $A\uAlg/C$, and} \\
  &\uDer_B(C,J)  & &\text{on $B\uAlg/C$.}
\end{alignat*}
In order to relate the cohomology groups of these sheaves, we shall define a fourth site $A\uAlg/BC$ and several sheaves on it that are closely related to the sheaves enumerated above.

Let $A\uAlg/BC = A\uAlg(E)/BC$ be the category of all $(U, B'C')$, where $B'C'$ is shorthand notation for a diagram of $A_U$-algebras,
\begin{equation*}
  \xymatrix{
    B' \ar[r] \ar[d] & C' \ar[d] \\
    B_U \ar[r] & C_U .
  }
\end{equation*}
We shall call a family of maps $(U_i, B'_i C'_i) \rightarrow (U, B'C')$ covering if, for any $V$ over $U$, and any finite collections $S \subset \Gamma(U, B')$ and $T \subset \Gamma(U, C')$, it is possible, locally in $V$, to find a factorization of $V \rightarrow U$ through some $U_i$ and find lifts of $S$ to $\Gamma(V, B'_i)$ and $T$ to $\Gamma(V, C'_i)$.

\begin{remark}
  This site bears some formal resemblance to one arising from Illusie's mapping cylinder topos \cite[III.4]{Illusie} associated to the morphism of sites $u : A\uAlg/B \rightarrow A\uAlg/C$ (Section~\ref{sec:ex-tri}).  However, this resemblance appears to be superficial, since the arrows in $A\uAlg/BC$ are directed opposite those in the mapping cylinder topos.  It would be interesting, though, to know if the exactness of the transitivity triangle could be proved using a mapping cylinder construction in place of $A\uAlg/BC$.
\end{remark}

\begin{lemma} \label{lem:loc-triv}
  If $B$ is a finitely generated polynomial algebra over $A$, and $C$ is a finitely generated polynomial algebra over $B$, then all covers of $BC$ in $A\uAlg(E)/BC$ are pulled back from covers of the final object in $E$.
\end{lemma}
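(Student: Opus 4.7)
The plan is to use the finite polynomial presentations of $B$ and $C$, together with the covering condition defining the topology on $A\uAlg/BC$, to exhibit any cover of $BC$ as being refined by the image of a cover $\{V_j \to e\}$ of the final object $e \in E$ under the obvious section functor $V \mapsto (V, B_V \to C_V)$. Write $B = A[x_1, \ldots, x_n]$ and $C = B[y_1, \ldots, y_m]$, and set $S = \{x_1, \ldots, x_n\} \subset \Gamma(e, B)$ and $T = \{y_1, \ldots, y_m\} \subset \Gamma(e, C)$.

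Given a covering family $\{(U_i, B'_i C'_i) \to BC\}$, I would apply the covering condition to the identity $V = U = e$, with the finite sets $S$ and $T$ above. This produces a cover $\{V_j \to e\}$ of the final object such that, for each $j$, the morphism $V_j \to e$ factors through some $U_{i(j)}$, together with lifts $\tilde x_k \in \Gamma(V_j, B'_{i(j)})$ of $x_k$ along the structural map $B'_{i(j)} \to B_{U_{i(j)}}$ and lifts $\tilde y_l \in \Gamma(V_j, C'_{i(j)})$ of $y_l$ along $C'_{i(j)} \to C_{U_{i(j)}}$.

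Since $B_{V_j} = A_{V_j}[x_1, \ldots, x_n]$ and $C_{V_j} = A_{V_j}[x_1, \ldots, x_n, y_1, \ldots, y_m]$ are polynomial algebras, the universal property produces $A_{V_j}$-algebra maps $B_{V_j} \to B'_{i(j)}|_{V_j}$ sending $x_k \mapsto \tilde x_k$ and $C_{V_j} \to C'_{i(j)}|_{V_j}$ sending $x_k \mapsto \tilde x_k$, $y_l \mapsto \tilde y_l$. Because the $\tilde x_k, \tilde y_l$ are lifts, these maps are sections of the structural arrows $B'_{i(j)}|_{V_j} \to B_{V_j}$ and $C'_{i(j)}|_{V_j} \to C_{V_j}$, and by construction they commute with the horizontal arrows $B' \to C'$ and $B \to C$. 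Packaging them with the factorization $V_j \to U_{i(j)}$ then yields a morphism $(V_j, B_{V_j} C_{V_j}) \to (U_{i(j)}, B'_{i(j)} C'_{i(j)})$ in $A\uAlg/BC$, showing that the given cover is refined by $\{(V_j, B_{V_j} C_{V_j}) \to BC\}$, which is pulled back from $\{V_j \to e\}$.

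The main obstacle is essentially bookkeeping: one must keep track of the directions of the arrows in the defining square of an object of $A\uAlg/BC$ and verify that the maps produced from the polynomial universal property respect both the horizontal arrow $B' \to C'$ and the vertical structural maps $B' \to B_U$, $C' \to C_U$, so that the resulting triple really defines a morphism in the intermediate site. Once those compatibilities are nailed down, nothing further is needed, and I do not expect any substantive difficulty.
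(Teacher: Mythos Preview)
Your proposal is correct and takes the same approach as the paper: choose $S$ to be the finite set of polynomial generators of $B$ over $A$ and $T$ the generators of $C$ over $B$, then apply the covering condition at $V=U=e$ to produce a refining cover of the final object of $E$. The paper's proof is a one-line pointer to this choice; you have simply spelled out the construction of the refining morphisms $(V_j, B_{V_j}C_{V_j}) \to (U_{i(j)}, B'_{i(j)}C'_{i(j)})$ via the universal property of polynomial algebras, which is exactly the content the paper leaves implicit.
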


In the definition of the topology on $A\uAlg/BC$ above, take $S$ to be the set of generators of $B$ over $A$, and take $T$ to be the set of generators for $C$ over $B$.  \qed

{\allowdisplaybreaks
There are a number of functors relating $A\uAlg/BC$ to other sites:
\begin{alignat*}{7}
  \sigma&_! & && \quad : \quad && A\uAlg/&B  &\rightarrow  A\uAlg/&BC& \quad : & \quad  &B' &\mapsto B'B' \\
  \sigma&^\ast& = \alpha&_!  &:\quad&&A\uAlg/&BC &\rightarrow  A\uAlg/&B  & : & \quad & B'C' &\mapsto B' \\
  \sigma&_\ast &= \alpha&^\ast &:\quad&& A\uAlg/&B  &\rightarrow A\uAlg/&BC  & :& &B'  &\mapsto B'C \\
  \\
  \tau&_!&&&:\quad&&A\uAlg/&C  &\rightarrow A\uAlg/&BC&:&&C' & \mapsto AC' \\
  \tau&^\ast& = \beta&_!  &:\quad&&A\uAlg/&BC & \rightarrow A\uAlg/&C & : &&B' C' & \mapsto C' \\
  \tau&_\ast& = \beta&^\ast&:\quad&& A\uAlg/&C & \rightarrow A\uAlg/&BC&: &&C'&\mapsto (C'\fp_C B)C' \\
  \\
  &&\gamma&_! &:\quad&&A\uAlg/&BC &\rightarrow B\uAlg/&C &:&& B'C'& \mapsto C' \tensor_A B \\
  &&\gamma &^\ast&:\quad& &B\uAlg/&C & \rightarrow A\uAlg/&BC&:&&C'&\mapsto BC' .
\end{alignat*}
In each case, the functor $F_!$ is left adjoint to $F^\ast$.  This implies that the functors $\alpha^\ast$, $\beta^\ast$, and $\gamma^\ast$ are all left exact, and that $\alpha$ and $\beta$ both have exact left adjoints.  Since $\alpha^\ast$, $\beta^\ast$, $\gamma^\ast$, $\sigma^\ast$, and $\tau^\ast$ are all continuous we obtain morphisms of topoi:
\begin{alignat*}{1}
  \alpha &  :  A\uAlg/BC  \rightarrow   A\uAlg/B \\
  \beta & : A\uAlg/BC  \rightarrow  A \uAlg/C \\
  \gamma & : A\uAlg/BC   \rightarrow  B\uAlg/C \\ 
  \sigma & : A\uAlg/B  \rightarrow  A\uAlg/BC \\
  \tau & : A\uAlg/C  \rightarrow  A\uAlg/BC .
\end{alignat*}
and the exact left adjoints $\alpha_!$ and $\beta_!$ to $\alpha^\ast$ and $\beta^\ast$, defined originally on the level of sites, extend to exact left adjoints on the level of sheaves \cite[Proposition~I.5.4~4]{sga4-1}.  Since the natural transformations $\alpha_! \alpha^\ast \rightarrow \id$ and $\beta_! \beta^\ast \rightarrow \id$ are isomorphisms, so are the natural transformations $\id \rightarrow \alpha_\ast \alpha^\ast$ and $\id \rightarrow \beta_\ast \beta^\ast$ \cite[Proposition~I.5.6]{sga4-1} (or mimic the proof of Proposition~\ref{prop:const-acyclic}).  As in Proposition~\ref{prop:const-acyclic}, it follows from this that for any sheaf of abelian groups $F$ on $A\uAlg/B$ (resp.\ on $A\uAlg/C$), we have}
\begin{multline}
  H^\ast\bigl(A\uAlg/BC, \alpha^\ast F\bigr) = H^\ast(A\uAlg/B, F)  \label{eqn:11} \\ (\text{resp.\ } H^\ast\bigl(A\uAlg/BC,\beta^\ast F\bigr) = H^\ast(A\uAlg/C, F) \: ) .
\end{multline}

\subsection{Sheaves on $A\uAlg/BC$}
\label{sec:sheaves}

Suppose that $I$ is a $B$-module, $J$ is a $C$-module, and $I \rightarrow J$ is a $B \rightarrow C$ homomorphism of modules.  Then we shall say that $IJ$ is a $BC$-module.

If $IJ$ is a $BC$-module, define $\uDer_A(BC, IJ)$ to be the sheaf on $A\uAlg/BC$ represented by the object
\begin{equation*}
  \xymatrix{
    B + I \ar[r] \ar[d] & C + J \ar[d] \\
    B \ar[r] & C .
  }
\end{equation*}
A section of $\uDer_A(BC, IJ)$ over $B'C'$ is therefore a pair of derivations $B' \rightarrow I$ and $C' \rightarrow J$ that induce the same derivation $B' \rightarrow J$.

We may make the following identifications:
\begin{alignat*}{2}
  \alpha&^\ast \uDer_A(B, J) &= \: &  \uDer_A(BC, J0) \\
  \beta&^\ast \uDer_A(C,J) &= \: & \uDer_A(BC, JJ) \\
  \gamma&^\ast \uDer_B(C,J) &= \: &\uDer_A(BC, 0J) .
\end{alignat*}
Combining these with Equations~\eqref{eqn:11} demonstrates that
\begin{alignat}{2} \label{eqn:2}
  H^\ast\bigl(A\uAlg/BC, \uDer_A(BC,J0)\bigr) & = H^\ast\bigl(A\uAlg/B, \uDer_A(B,J)\bigr) \\
  H^\ast\bigl(A\uAlg/BC, \uDer_A(BC,JJ)\bigr) & = H^\ast\bigl(A\uAlg/C, \uDer_A(C,J)\bigr) \notag.
\end{alignat}

\begin{proposition}
  The sequence
  \begin{equation} \label{eqn:3}
    0 \rightarrow 0J \rightarrow JJ \rightarrow J0 \rightarrow 0
  \end{equation}
  of $BC$-modules is exact.  The induced sequence
  \begin{equation} \label{eqn:1}
    0 \rightarrow \uDer_A(BC, 0J) \rightarrow \uDer_A(BC, JJ) \rightarrow \uDer_A(BC, J0) \rightarrow 0  
  \end{equation}
  is also exact.
\end{proposition}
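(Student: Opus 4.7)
The plan is to handle the two assertions separately: the first by a componentwise check, the second by identifying sections explicitly and then appealing to the topology on $A\uAlg/BC$ to upgrade a pointwise almost-triviality to local surjectivity.

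For exactness of~\eqref{eqn:3}, observe that a sequence of $BC$-modules is exact if and only if the underlying sequences of $B$-modules and of $C$-modules are both exact. In our case, the underlying $B$-module sequence reads $0 \to 0 \to J \to J \to 0$ and the underlying $C$-module sequence reads $0 \to J \to J \to 0 \to 0$, both manifestly exact.

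To analyze~\eqref{eqn:1}, I would first unravel the sections. A section of $\uDer_A(BC, IJ)$ over $(U, B'C')$ is a pair of $A$-derivations $\delta_B : B' \to I$ and $\delta_C : C' \to J$ inducing the same derivation $B' \to J$. Specializing, $\uDer_A(BC, J0)(B'C')$ is the set of $A$-derivations $B' \to J$; $\uDer_A(BC, JJ)(B'C')$ is the set of $A$-derivations $C' \to J$, with $\delta_B$ forced by restriction along $B' \to C'$; and $\uDer_A(BC, 0J)(B'C')$ is the set of $A$-derivations $C' \to J$ vanishing on the image of $B'$. The two arrows in~\eqref{eqn:1} are inclusion and restriction, so injectivity at the left and exactness at the middle term are immediate on sections.

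The delicate point is local surjectivity of $\uDer_A(BC, JJ) \to \uDer_A(BC, J0)$: an $A$-derivation $B' \to J$ does not automatically extend to an $A$-derivation $C' \to J$. Here I would invoke the topology on $A\uAlg/BC$ to cover $(U, B'C')$ by objects $(U_i, B''_i C''_i)$ in which $B''_i$ is a finitely generated polynomial $A_{U_i}$-algebra and $C''_i$ is a finitely generated polynomial $B''_i$-algebra, with structural maps to $B'C'$ obtained by sending free generators to prescribed finite collections of elements of $\Gamma(U,B')$ and $\Gamma(U,C')$. Such families are covers by the very definition of the topology, exactly in the spirit of Lemma~\ref{lem:loc-triv}. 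On each $(U_i, B''_i C''_i)$, the restriction map from $A$-derivations $C''_i \to J$ to $A$-derivations $B''_i \to J$ is surjective on the nose: any derivation of $B''_i$ extends to $C''_i$ by sending the free generators of $C''_i$ over $B''_i$ to $0$ and invoking the Leibniz rule. This produces a local lift of the given section and completes the proof. The main obstacle is arranging the polynomial cover; once available, the extension of derivations is formal.
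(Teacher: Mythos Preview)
Your proof is correct and follows essentially the same approach as the paper. The paper dispatches exactness of~\eqref{eqn:3} as ``immediate'', verifies left exactness of~\eqref{eqn:1} directly (also noting it follows from a left-adjoint argument), and for surjectivity reduces locally to the case where $C'$ is a finitely generated polynomial algebra over $B'$, then declares the extension obvious---precisely your argument of sending the free generators to zero, only stated more tersely.
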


The exactness of~\eqref{eqn:3} is immediate.  Left exactness of~\eqref{eqn:1} can be checked directly.  It also follows from the existence of a left adjoint for the functor sending $IJ$ to $(B + I)(C + J)$:  the left adjoint sends $BC$ to $\Omega_B \Omega_C$.

To prove the surjectivity of $\uDer_A(BC,JJ) \rightarrow \uDer_A(BC,J0)$ we must show that any derivation $B \rightarrow J$ may be extended to a derivation $C \rightarrow J$, locally in $A\uAlg/BC$.  Since this is a local question, we are free to assume that $C$ is a finitely generated polynomial algebra over $B$, and then the assertion is obvious.  \qed

\begin{remark}
  Note the significance of the topology in the proof:  the sequence is far from exact on the level of presheaves.
\end{remark}

\begin{proposition} \label{prop:gamma}
  The natural map $\uDer_B(C,J) \rightarrow R \gamma_\ast \uDer_A(BC, 0J)$ is an isomorphism.
\end{proposition}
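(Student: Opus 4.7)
The plan is to follow the template of Proposition~\ref{prop:const-acyclic}: since $\gamma^\ast \uDer_B(C,J) = \uDer_A(BC,0J)$, it is enough to verify both that the adjunction unit $\uDer_B(C,J) \to \gamma_\ast \gamma^\ast \uDer_B(C,J)$ is an isomorphism and that $R^i \gamma_\ast \uDer_A(BC,0J)$ vanishes for $i \geq 1$.

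The unit I would check by a direct computation. A section of $\gamma_\ast\uDer_A(BC,0J)(C') = \uDer_A(BC,0J)(BC')$ is a pair consisting of the (necessarily zero) derivation $B\to 0$ and an $A$-derivation $C'\to J$ whose restriction along $B\to C'$ agrees with the first. The compatibility forces $C'\to J$ to kill the image of $B$, making it a $B$-derivation, and so identifies the group with $\uDer_B(C,J)(C')$.

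For the higher-direct-image vanishing the cleanest analogue of Proposition~\ref{prop:const-acyclic} would be to show that $\gamma^\ast$ preserves injectives by exhibiting an exact left adjoint. The site-level left adjoint is $\gamma_!(B'C') = C'\otimes_{B'}B$; but, in contrast to $\alpha_!$ and $\beta_!$, this does not obviously preserve fibre products, because tensor product does not commute with fibre products of algebras in general, so $\gamma_!$ is not exact at the level of sites. This is the main obstacle. I expect the proof to bypass it by computing $R^i\gamma_\ast\uDer_A(BC,0J)$ locally on $B\uAlg/C$: since the higher direct image is the sheafification of the presheaf
\[
C'\mapsto H^i\bigl(A\uAlg/BC|_{BC'},\,\uDer_A(BC,0J)|_{BC'}\bigr),
\]
it is enough to verify vanishing after replacing $C'$ by a cover of finitely generated polynomial $B$-algebras, and refining so that $B$ is polynomial over $A$. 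Lemma~\ref{lem:loc-triv}, applied to the localized site $A\uAlg/BC|_{BC'}$, then asserts that every cover there is pulled back from a cover of the underlying object of $E$, so the cohomology computation collapses to the cohomology on a localization of $E$ of an explicit sheaf built from the module of $B$-derivations of $C'$ into $J$. Positive-degree vanishing then follows from (a localized form of) Proposition~\ref{prop:const-acyclic}, and the main remaining work is packaging this local picture into a termwise vanishing of $R^i\gamma_\ast$ on all of $B\uAlg/C$.
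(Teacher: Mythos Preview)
Your outline is the paper's: reduce to $\gamma_\ast$ by a direct computation (fine as written), then show $R^i\gamma_\ast$ vanishes by first localizing on $B\uAlg/C$ so that $C$ is a finitely generated polynomial $B$-algebra $B[S]$, and finally reducing to the acyclicity of a constant sheaf $J^S$.

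The step that does not go through as stated is ``refining so that $B$ is polynomial over $A$''.  On $B\uAlg/C$ the ring $B$ is fixed; no cover there changes it.  After you localize so that $C'=B[S]$, the slice $(A\uAlg/BC)/\gamma^\ast C'$ is $A\uAlg/BC'$, and its terminal object $BC'$ still has the original $B$ in the first slot.  Lemma~\ref{lem:loc-triv} therefore does \emph{not} apply to that terminal object, and your assertion that ``every cover there is pulled back from $E$'' is unjustified.  The paper supplies exactly the missing mechanism: once $C=B[S]$, the left exact continuous functor $A\uAlg/B\to A\uAlg/BC$, $B'\mapsto B'(B'[S])$, yields a morphism of sites $t:A\uAlg/BC\to A\uAlg/B$.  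Now ``make $B$ polynomial'' becomes a legitimate localization on $A\uAlg/B$: locally $B=A[T]$, whence $t^\ast B = A[T]\,A[T\amalg S]$, which \emph{is} covered by Lemma~\ref{lem:loc-triv}; this proves $t$ acyclic.  One then computes $t_\ast\uDer_A(BC,0J)=\pi^\ast J^S$ and invokes Proposition~\ref{prop:const-acyclic}.  Your ``remaining work in packaging'' is precisely the construction of $t$ (or, equivalently, an explicit \v{C}ech argument over a hypercover of $BC'$ by objects with polynomial first component, checking that the resulting cosimplicial object is the constant $J^S$); without it the collapse to cohomology on $E$ is not yet established.
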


This is a local question on $B\uAlg/C$, so we can assume that $C = B[S]$ for some finite set $S$.  Now consider the functor $A\uAlg/B \rightarrow A\uAlg/BC$ sending $B'$ to the diagram
\begin{equation*}
  \xymatrix{
    B' \ar[r] \ar[d] & B'[S] \ar[d] \\
    B \ar[r] & C .
  }
\end{equation*}
This is left exact and continuous, so it induces a morphism of sites
\begin{equation*}
  t : A\uAlg/BC \rightarrow A\uAlg/B .
\end{equation*}

\begin{lemma}
  The morphism $t$ is acyclic.
\end{lemma}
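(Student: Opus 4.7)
The plan is to mirror the strategy of Proposition~\ref{prop:const-acyclic}: saying that $t$ is acyclic means that the unit $F \to R t_\ast t^\ast F$ is a quasi-isomorphism for every abelian sheaf $F$ on $A\uAlg/B$. To achieve this, I would first show that $t^\ast$ preserves injectives, and then verify that $F \to t_\ast t^\ast F$ is already an isomorphism of sheaves, so that no derivation is needed.

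For injective-preservation, I would construct an exact left adjoint $t_!^{\mathrm{site}} : A\uAlg/BC \to A\uAlg/B$ to the site-level functor $B' \mapsto (B', B'[S])$. Given $(B', C') \in A\uAlg/BC$, the object $t_!^{\mathrm{site}}(B', C')$ should be the universal $B'' \in A\uAlg/B$ equipped simultaneously with a map $B' \to B''$ over $B$ and a factorization $C' \to B''[S]$ of the structure map $C' \to C = B[S]$; because $C = B[S]$ is free over $B$, choosing lifts in $C'$ of the generators $S \subset C$ should present this data as a pushout built from the freeness. Once $t_!^{\mathrm{site}}$ is produced and verified to be exact and cocontinuous, \cite[Proposition~I.5.4~4]{sga4-1} lifts it to an exact left adjoint $t_!$ on sheaves, and $t^\ast$ consequently preserves injectives.

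The second step then reduces to the direct computation
\begin{equation*}
  \Gamma(B', t_\ast t^\ast F) = \Gamma\bigl((B', B'[S]), t^\ast F\bigr) = \Gamma\bigl(t_!^{\mathrm{site}}(B', B'[S]), F\bigr) = \Gamma(B', F),
\end{equation*}
using that $t_!^{\mathrm{site}}(t^\ast_{\mathrm{site}} B') = B'$, which falls out formally from the adjunction. The main obstacle is the construction of $t_!^{\mathrm{site}}$ and the verification of its exactness and cocontinuity: unlike the simple forgetful adjoint $\pi_!$ used in Proposition~\ref{prop:const-acyclic}, this left adjoint must package $B'$ together with the constraints imposed by $C' \to C$ into a single algebra, and it is precisely at this point that the specific form $C = B[S]$---and the local triviality of covers of polynomial algebras recorded in Lemma~\ref{lem:loc-triv}---must be exploited to reconcile the construction with the Grothendieck topology.
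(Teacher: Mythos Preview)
Your reading of ``$t$ is acyclic'' is not what the paper intends, and the argument you outline would not establish what is actually needed for the subsequent reduction.  You interpret acyclicity as the statement that $F \to Rt_\ast t^\ast F$ is a quasi-isomorphism for every abelian sheaf $F$ on $A\uAlg/B$.  But immediately after the lemma the paper uses acyclicity of $t$ to replace $R\pi_\ast\uDer_A(BC,0J)$ by $R\pi^{B/A}_\ast\bigl(t_\ast\uDer_A(BC,0J)\bigr)$; via the Leray spectral sequence for the factorisation $\pi = \pi^{B/A}\circ t$, this requires $R^pt_\ast\,\uDer_A(BC,0J)=0$ for $p>0$.  The sheaf $\uDer_A(BC,0J)$ is \emph{not} of the form $t^\ast F$ --- its sections over $(B',C')$ depend genuinely on $C'$ --- so the unit statement you are aiming for does not yield this vanishing.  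In the paper, ``$t$ is acyclic'' means $R^pt_\ast G=0$ for $p>0$ for \emph{every} abelian sheaf $G$ on $A\uAlg/BC$.

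The paper's proof proceeds quite differently, by a direct local argument on the target rather than by producing a left adjoint.  Since $R^pt_\ast G$ is the sheaf associated to $B'\mapsto H^p(t^\ast B',G)$, it suffices to check vanishing on a cofinal family in $A\uAlg/B$.  Locally one may take $B'=A[T]$ with $T$ finite; then $t^\ast B'=\bigl(A[T],A[T\amalg S]\bigr)$, and Lemma~\ref{lem:loc-triv} says that every cover of this object in $A\uAlg/BC$ is pulled back from $E$.  The same is true of $B'=A[T]$ in $A\uAlg/B$, so both slice sites carry the topology induced from $E$, and the higher derived pushforwards of $t$ vanish at $B'$.  Your instinct that Lemma~\ref{lem:loc-triv} is the crucial input is correct, but it enters here, in the local computation of $R^pt_\ast$, not in the construction of a left adjoint.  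As for that construction: a morphism $(B',C')\to(B'',B''[S])$ in $A\uAlg/BC$ entails a map $C'\to B''[S]$ lifting $C'\to B[S]$, and there is no reason for the generators $S\subset C$ to lift to $C'$, so the ``pushout built from freeness'' you allude to is not obviously available.
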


Locally in $A\uAlg/B$, the $A$-algebra $B$ is freely generated by a finite set $T$.  The corresponding object $t^\ast B$ is
\begin{equation*}
  \xymatrix{
    A[T] \ar[r] \ar[d] & A[T \amalg S] \ar[d] \\
    B \ar[r] & C ,
  }
\end{equation*}
which is acyclic relative to $E$ in $A\uAlg/BC$ by Lemma~\ref{lem:loc-triv}.  \qed

The proposition therefore reduces to showing that $t_\ast \uDer_A(BC, 0J)$ is acyclic relative to $E$ when $C = B[S]$.  But in that case, $t_\ast \uDer_A(BC, 0J) = \pi^\ast J^S$, where $\pi : A\uAlg/B \rightarrow E$ is the projection.  By Proposition~\ref{prop:const-acyclic}, the sheaf $\pi^\ast J$ is acyclic relative to $E$.  \qed

Let $\pi$ denote the projection from $A\uAlg/BC$ to $E$.  Applying $R\pi_\ast$ to the exact sequence~\eqref{eqn:1} gives an exact triangle on $E$, and in view of the identifications of~\eqref{eqn:2} and the proposition, we get an exact triangle
\begin{equation} \label{eqn:4}
  R \pi^{C/B}_\ast \uDer_B(C,J) \rightarrow R \pi^{C/A}_{\ast} \uDer_A(C,J) \rightarrow R \pi^{B/A}_\ast \uDer_A(B,J)
\end{equation}
in the derived category of sheaves of $C$-modules on $E$, completing the proof of Theorem~\ref{thm:trans}.  It follows that we have a long exact sequence
\begin{multline} \label{eqn:5}
  \cdots \rightarrow R^p \pi^{C/B}_\ast \uDer_B(C,J) \rightarrow R^p \pi^{C/A}_\ast \uDer_A(C,J) \rightarrow R \pi^{B/A}_\ast \uDer_A(B,J) \\ \rightarrow R^{p+1} \pi^{C/B}_\ast \uDer_B(C,J) \rightarrow \cdots .
\end{multline}
Pushing forward to a point, we also get a long exact sequence (the ``deformation--obstruction sequence''),
\begin{multline*}
  \cdots \rightarrow H^p\bigl(B\uAlg/C, \uDer_B(C,J)\bigr) \rightarrow H^p\bigl(A\uAlg/C, \uDer_A(C,J)\bigr) \\ \rightarrow H^p\bigl(A\uAlg/B, \uDer_A(B,J)\bigr) \rightarrow H^{p+1}\bigl(B\uAlg/C, \uDer_B(C,J)\bigr) \rightarrow \cdots  .
\end{multline*}

\subsection{Computing the exact triangle}
\label{sec:ex-tri}

The maps in the exact triangle~\eqref{eqn:4} were defined using the site $A\uAlg/BC$.  In this section we will see how these maps can be defined more intrinsically in terms of the algebra homomorphisms $A \rightarrow B \rightarrow C$.  

There is a functor $u_! : A\uAlg/B \rightarrow A\uAlg/C$ sending an $A$-algebra over $B$ to the $A$-algebra over $C$ obtained by composition with $B \rightarrow C$.  This has the right adjoint $u^\ast : A\uAlg/C \rightarrow A\uAlg/B$ sending $C'$ to $C' \fp_C B$.  This functor is left exact and continuous and gives rise to a morphism of sites $u : A \uAlg/B \rightarrow A\uAlg/C$.  The fuctor $u^\ast$ factors through $A\uAlg/BC$ as $\sigma^\ast \circ \beta^\ast$, giving a facorization of the morphism of sites $u : A\uAlg/B \rightarrow A\uAlg/C$ as $\beta \circ \sigma$.

There is also a functor $v_! : A\uAlg/C \rightarrow B\uAlg/C$ sending an $A$-algebra $C'$ over $C$ to the $B$-algebra $B \tensor_A C'$ over $C$.  It has the right adjoint $v^\ast$ which gives a $B$-algebra over $C$ its $A$-algebra structure induced from the map $A \rightarrow B$.  Since $v^\ast$ is exact and continuous, it induces a morphism of sites $v : A\uAlg/C \rightarrow B\uAlg/C$.  The factorization $v^\ast = \tau^\ast \circ \gamma^\ast$ permits us to factor $v = \gamma \circ \tau$.

On $A\uAlg/C$ there is a commutative diagram
\begin{equation*}
  \xymatrix{
    F \ar[r] \ar[d] & R u_\ast u^\ast F \ar@{=}[d] \\
    R \beta_\ast \beta^\ast F \ar[r] \ar[ur] & R \beta_\ast \alpha^\ast u^\ast F .
  }
\end{equation*}
The triangle in the upper left merely expresses the fact that the unit of the adjunction $(u^\ast, u_\ast)$ is the composition of the units of the adjunctions $(\beta^\ast, \beta_\ast)$ and $(\sigma^\ast, \sigma_\ast)$.  The identification on the right side recognizes that $u_\ast = \beta_\ast \sigma_\ast = \beta_\ast \alpha^\ast$.

Applying $R \pi^{C/A}_\ast$ to the commutative diagram above, we get the commutative diagram
\begin{equation}\label{eqn:12}
  \xymatrix{
    R \pi^{C/A}_\ast F \ar[r] \ar[d] & R \pi^{B/A}_\ast u^\ast F \ar@{=}[d] \\
    R \pi_\ast \beta^\ast F \ar[r] & R \pi_\ast \alpha^\ast u^\ast F
  }
\end{equation}
in the derived category of sheaves of $C$-modules on $E$.  (Here $\pi$ denotes the projection  $A\uAlg/BC \rightarrow E$.)  The vertical arrow on the left side of the diagram was proved to be an isomorphism in Section~\ref{sec:sheaves}.

\begin{lemma}
  The diagram
  \begin{equation*}
    \xymatrix@R=10pt{
      R \pi^{C/B}_\ast \uDer_B(C,J) \ar[r] & R \pi^{C/A}_\ast \uDer_A(C,J) \ar[r] & R \pi^{B/A}_\ast \uDer_A(B,J) \\
      R \pi_\ast \uDer_A(BC,0J) \ar[r] \ar@{=}[u] & R \pi_\ast \uDer_A(BC,JJ) \ar@{=}[u] \ar[r] & R \pi_\ast \uDer_A(BC,J0) \ar@{=}[u] 
    }
  \end{equation*}
  commutes.  The vertical identifications are those of Equations~\eqref{eqn:2} and Proposition~\ref{prop:gamma};  the upper row is the exact triangle~\eqref{eqn:4}; the morphisms in the lower row are induced from the morphisms of sites $A\uAlg/B \xrightarrow{u} A\uAlg/C \xrightarrow{v} B\uAlg/C$.
\end{lemma}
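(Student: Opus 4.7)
The plan is to verify the two squares of the diagram separately, using the factorizations $u = \beta \circ \sigma$ and $v = \gamma \circ \tau$ recorded just above, together with the commutative diagram \eqref{eqn:12}.

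For the right square, I would specialize \eqref{eqn:12} to $F = \uDer_A(C,J)$. Using $u^\ast \uDer_A(C,J) = \uDer_A(B,J)$ together with the identifications of Section~\ref{sec:sheaves}, namely $\beta^\ast \uDer_A(C,J) = \uDer_A(BC,JJ)$ and $\alpha^\ast \uDer_A(B,J) = \uDer_A(BC,J0)$, the commutative square \eqref{eqn:12} becomes the right square of the lemma---with the vertical identifications being those of \eqref{eqn:2}---provided one checks that the bottom map of \eqref{eqn:12}, which is $R\pi_\ast$ applied to the unit $\beta^\ast \uDer_A(C,J) \to \alpha^\ast u^\ast \uDer_A(C,J)$ of the adjunction $(\sigma^\ast,\sigma_\ast)$, agrees with the map induced from the projection $JJ \to J0$ in \eqref{eqn:1}. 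Both source and target are representable sheaves, so by Yoneda this reduces to a computation at sections: each map sends a compatible pair of derivations $(d_B, d_C)$ over $B'C'$ to $(d_B, 0)$.

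For the left square, I would mimic the construction of \eqref{eqn:12} with the factorization $v = \gamma \circ \tau$: the unit $G \to R v_\ast v^\ast G$ factors as $G \to R\gamma_\ast \gamma^\ast G \to R\gamma_\ast \tau_\ast \tau^\ast \gamma^\ast G$, and applying $R\pi^{C/B}_\ast$ (using $\pi^{C/B}\circ\gamma = \pi$ and $\pi^{C/A} = \pi\circ\tau$) yields the analogous commutative square. Specialized to $G = \uDer_B(C,J)$, one has $\gamma^\ast G = \uDer_A(BC,0J)$ and $v^\ast G = \uDer_A(C,J)$; the left vertical, namely $R\pi^{C/B}_\ast \uDer_B(C,J) \to R\pi_\ast \uDer_A(BC,0J)$, is an isomorphism by Proposition~\ref{prop:gamma} and is the identification used in the lemma, while the right vertical coalesces via $\tau_\ast = \beta^\ast$ into the identification of \eqref{eqn:2}. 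As before, a Yoneda computation identifies the bottom map (induced by the $\tau$-unit) with the map coming from the inclusion $0J \to JJ$ in \eqref{eqn:1}: both send a derivation $d : C' \to J$ to the pair $(0, d)$.

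The only real obstacle is the Yoneda identification of the adjunction-unit maps at the sheaf level with the module-theoretic maps of \eqref{eqn:1}; every other ingredient is either recorded in the preceding discussion or obtained by direct analogy with the derivation of \eqref{eqn:12}. Once those two identifications are checked, commutativity of the full diagram is immediate.
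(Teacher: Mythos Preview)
Your proposal is correct and follows essentially the same approach as the paper: for the right square you specialize diagram~\eqref{eqn:12} to $F=\uDer_A(C,J)$, and for the left square you build the analogous commutative square from the factorization $v=\gamma\circ\tau$ and specialize to $\uDer_B(C,J)$, exactly as the paper does. The only difference is that you make explicit the Yoneda identification of the $\sigma$- and $\tau$-unit maps with the maps induced by $JJ\to J0$ and $0J\to JJ$ in~\eqref{eqn:1}; the paper leaves this verification implicit.
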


The commutativity of the square on the right is Diagram~\eqref{eqn:12}, applied with $F = \uDer_A(C,J)$.

The commutativity of the square on the left can be obtained in a very similar way, using $v$ in place of $u$.  In this case, we apply have a commutative diagram
\begin{equation*}
  \xymatrix{
    F \ar[r] \ar[d] & R v_\ast v^\ast F \ar[d] \\
    R \gamma_\ast \gamma^\ast F \ar[r] \ar[ur] & R \gamma_\ast \beta^\ast v^\ast F
  }
\end{equation*}
for any sheaf $F$ on $B\uAlg/C$.  Applying $R \pi^{C/B}_\ast$ and substituting $F = \uDer_B(C, J)$ gives the commutativity of the square on the left.  \qed

\subsection{The cotangent complex}

We continue to consider a sequence $A \rightarrow B \rightarrow C$ of homomorhpisms of algebras of $E$.

Recall that there are canonical morphisms of cotangent complexes \cite[II.2]{Illusie},
\begin{equation} \label{eqn:13}
  \bL_{B/A} \tensor_B C \rightarrow \bL_{C/A} \rightarrow \bL_{C/B} .
\end{equation}
In fact, this is an exact triangle \cite[Proposition~II.2.1.2]{Illusie}; we shall provide an alternate proof of this fact by relating the triangle above to the exact triangle of Theorem~\ref{thm:trans}.

The following construction of the morphisms of the sequence~\eqref{eqn:13} is compatible with that given by Illusie \cite[II.1.2.3]{Illusie}.  Let $P_A(B)$ \cite[II.1.2.1]{Illusie} be the standard simplicial resolution of $B$ by free $A$-algebras, with $P_A(B)_0 = A[B]$ and $P_A(B)_{n+1} = A[P_A(B)_n]$.  Then there is a canonical map $P_A(B) \rightarrow P_A(C)$ such that the diagram
\begin{equation*}
  \xymatrix{
    P_A(B) \ar[r] \ar[d] & P_A(C) \ar[d] \\
    B \ar[r] & C
  }
\end{equation*}
commutes.  This induces a $P_A(B) \rightarrow P_A(C)$ homomorphism $\Omega_{P_A(B)/A} \rightarrow \Omega_{P_A(C)/A}$, hence a $B \rightarrow C$ homomorphism $\bL_{B/A} \rightarrow \bL_{C/A}$, and finally the desired map $\bL_{B/A} \tensor_B C \rightarrow \bL_{C/A}$.

For the second map, note that there is a canonical homomorphism of simplicial $A$-algebras $P_A(C) \rightarrow P_B(C)$, defined in degree zero by $A[C] \rightarrow B[C]$ and then by induction in degree $n + 1$, by $A[P_A(C)_n] \rightarrow B[P_B(C)_n]$.  This induces a homomorphism $\Omega_{P_A(C)} \rightarrow \Omega_{P_B(C)}$, compatible with the map $P_A(C) \rightarrow P_B(C)$, and hence a map of complexes of $C$-modules, $\bL_{B/A} \rightarrow \bL_{C/A}$.

Our task in this section is to prove

\begin{proposition} \label{prop:bdry-map}
  The triangle~\eqref{eqn:13} represents the triangle~\eqref{eqn:4} by application of the functor $R\uHom(-, J)$.
\end{proposition}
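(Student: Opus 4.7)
The plan is to match the $R\uHom(-,J)$ of triangle~\eqref{eqn:13} with the triangle~\eqref{eqn:4} term by term and map by map. For the three objects, I would invoke the main comparison theorem of \cite{def-rings}, which provides natural isomorphisms
\[
  R \pi^{Y/X}_\ast \uDer_X(Y, J) \simeq R\uHom_C\bigl(\bL_{Y/X} \tensor_Y C, J\bigr)
\]
for each of the extensions $(X, Y) \in \{(A, B), (A, C), (B, C)\}$ (the tensor with $C$ is redundant when $Y=C$). This identifies the three vertices of~\eqref{eqn:4} with $R\uHom(-,J)$ of the three vertices of~\eqref{eqn:13}, reducing the proposition to checking that the connecting morphisms also correspond under these identifications.

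For the maps, the preceding lemma expresses the two arrows of~\eqref{eqn:4} as those induced by the morphisms of sites $u : A\uAlg/B \to A\uAlg/C$ and $v : A\uAlg/C \to B\uAlg/C$, whose underlying site-level functors $u_!$ and $v_!$ are given by composition with $B \to C$ and by tensor product with $B$ over $A$. Iterating these two operations is precisely what produces the standard simplicial maps $P_A(B) \to P_A(C)$ and $P_A(C) \to P_B(C)$ that were used in Section~\ref{sec:ex-tri} to define the arrows of~\eqref{eqn:13}. So at a formal level the two families of maps agree, and the substantive point is that the identifications coming from \cite{def-rings} are natural with respect to them.

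I would verify this by computing both sides along the standard polynomial simplicial resolutions. On each $P_X(Y)_n$, Lemma~\ref{lem:loc-triv} together with Proposition~\ref{prop:const-acyclic} makes $\uDer_X(P_X(Y)_n, J)$ acyclic relative to $E$, so its global sections compute $\Der_X(P_X(Y)_n, J) = \Hom_{P_X(Y)_n}(\Omega_{P_X(Y)_n/X}, J)$; taking the total complex of the resulting cosimplicial object yields $R\uHom_C(\bL_{Y/X} \tensor_Y C, J)$ by the definition of the cotangent complex. Under this model the map induced by $u$ reduces to the map induced by $P_A(B) \to P_A(C)$ on Kähler differentials, which is precisely the definition of $\bL_{B/A} \tensor_B C \to \bL_{C/A}$; the case of $v$ is identical with $P_A(C) \to P_B(C)$ in place of $P_A(B) \to P_A(C)$. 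The main obstacle is purely bookkeeping — verifying that the sheaf-theoretic diagram of the preceding lemma and the simplicial diagram of Section~\ref{sec:ex-tri} really are two presentations of the same underlying functorialities — but since both ultimately reduce to applying $\Omega_{-/X}$ degreewise to the same resolution, no additional choices need to be reconciled and commutativity should fall out of the functoriality of $\Omega$.
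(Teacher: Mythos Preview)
Your argument handles the identification of the three vertices and the two ``visible'' arrows of the triangle, but it omits the boundary map
\[
  R \pi^{B/A}_\ast \uDer_A(B,J) \longrightarrow R \pi^{C/B}_\ast \uDer_B(C,J)[1],
\]
and this is a genuine gap. In a triangulated category, knowing that two exact triangles share the same three objects and the same two composable maps does \emph{not} force the third maps to agree: the axiom TR3 only guarantees the existence of some compatible isomorphism on the cone, not that a pre-chosen identification of the cones intertwines the boundary maps. Here the third vertex has already been identified via \cite{def-rings}, so one must verify separately that this identification carries Illusie's connecting map to the connecting map of~\eqref{eqn:4}. Your final paragraph (``no additional choices need to be reconciled'') glosses over exactly this point.

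The paper's proof is organized around this issue. For the two visible arrows it argues much as you do, computing on the hypercovers $P_A(B)P_A(C)$ and $P_A(C)P_B(C)$ in $A\uAlg/BC$. For the boundary map it introduces a different resolution: with $P = P_A(B)$, it takes $Q = P_P^\Delta(C)$, which is termwise free over $P$. Then $PQ$ is a hypercover of $A\uAlg/BC$ on which all three sheaves in the short exact sequence~\eqref{eqn:1} are acyclic (for $J$ injective), and evaluating~\eqref{eqn:1} on $PQ$ yields precisely the short exact sequence
\[
  0 \to \Omega_{P/A}\tensor_P C \to \Omega_{Q/A}\tensor_Q C \to \Omega_{Q/P}\tensor_Q C \to 0
\]
that Illusie uses to define the triangle~\eqref{eqn:13}. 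This identifies the two boundary maps directly at the level of short exact sequences, which is what your argument lacks. The separate resolutions $P_A(B)$, $P_A(C)$, $P_B(C)$ you invoke do not sit in a single short exact sequence of K\"ahler differentials (since $P_B(C)$ is not free over $P_A(B)$), so they cannot by themselves witness the agreement of connecting homomorphisms.
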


We must show that the following diagrams commute:
\begin{gather*}
  \xymatrix@C=15pt@R=10pt{
    R \uHom_C(\bL_{C/B}, J) \ar[r] \ar@{=}[d] & R \uHom_C(\bL_{C/A}, J) \ar[r] \ar@{=}[d] & R \uHom_B(\bL_{B/A}, J) \ar@{=}[d]  \\
    R \pi^{C/B}_\ast \uDer_A(C, J) \ar[r] & R \pi^{C/A}_\ast \uDer_A(C, J) \ar[r] & R \pi^{B/A}_\ast \uDer_A(B,J) .
  } \\ \\
  \xymatrix@C=15pt@R=10pt{
    R \uHom_B(\bL_{B/A}, J) \ar@{=}[d] \ar[r] & R \uHom_C(\bL_{C/B}, J)[1] \ar@{=}[d] \\
    R \pi^{B/A}_\ast \uDer_A(B,J) \ar[r] & R \pi^{C/B}_\ast \uDer_A(C,J)[1] .
  }
\end{gather*}
The vertical arrows were shown to be equivalences in \cite[Theorem~3]{def-rings}.  Note that $R \uHom_B(\bL_{B/A}, J) = R \uHom_C(\bL_{B/A} \tensor_B C, J)$.

Since any sheaf of $C$-modules $J$ can be resolved by injectives, it is sufficient to prove the proposition when $J$ is injective.  In this case, we know that the hypercover $P_A(B) P_A(C)$ of $A\uAlg/BC$ is acyclic for the sheaves $\uDer_A(BC, J0)$ and $\uDer_A(BC, JJ)$ and that the map $\uDer_A(BC, JJ) \rightarrow \uDer_A(BC, J0)$ induces the second arrow of~\eqref{eqn:4}.   On the other hand, evaluating
\begin{equation*}
  \Gamma\bigl(P_A(B) P_A(C), \uDer_A(BC,JJ)\bigr) \rightarrow \Gamma\bigl(P_A(B) P_A(C), \uDer_A(BC, J0)\bigr)
\end{equation*}
gives precisely the map
\begin{equation*}
  \Hom(\bL_{B/A} \tensor_B C, J) \rightarrow \Hom(\bL_{C/A}, J) .
\end{equation*}

The proof for the first arrow of~\eqref{eqn:4} is very similar:  $P_A(C) P_B(C)$ is acyclic for both $\uDer_A(BC,JJ)$ and $\uDer_A(BC,0J)$, so the map
\begin{equation*}
  \Gamma\bigl(P_A(C) P_B(C), \uDer_A(BC,0J)\bigr) \rightarrow \Gamma\bigl(P_A(C) P_B(C), \uDer_A(BC,JJ)\bigr)
\end{equation*}
induces the first arrow of~\eqref{eqn:4} and can be evalued directly to give the map
\begin{equation*}
  \Hom(\bL_{C/B}, J) \rightarrow \Hom(\bL_{C/A}, J) .
\end{equation*}

We must also check that the boundary maps
\begin{equation*}
  R \pi_\ast^{B/A} \uDer_A(B,J) \rightarrow R \pi_\ast^{C/B} \uDer_B(C,J) [1]
\end{equation*}
agree with the maps
\begin{equation*}
  R \uHom(\bL_{B/A}, J) \rightarrow R \uHom(\bL_{C/B}, J) [1]
\end{equation*}
defined by Illusie.  Let $P = P_A(B)$ be the standard simplicial resolution of $B$.  Let $Q = P_P^\Delta(C)$ \cite[II.1.2.2.1]{Illusie}.  Then $Q$ is a simplicial resolution of $C$ that is free, term by term, over $P$.  The simplicial object $PQ$ of $A\uAlg/BC$ is a hypercover, and if $J$ is injective then the sheaves $\uDer_A(BC, J0)$, $\uDer_A(BC, JJ)$, and $\uDer_A(BC, 0J)$ are all acyclic for $PQ$.  Evaluating the exact sequence~\eqref{eqn:1} on $PQ$, we obtain an exact sequence of complexes (of sheaves on $E$),
\begin{equation*}
  0 \rightarrow \pi^{BC/A}_\ast \uDer_A(PQ, 0J) \rightarrow \pi^{C/A}_\ast \uDer_A(Q, J) \rightarrow \pi^{B/A}_\ast \uDer_A(P, J) \rightarrow 0 .
\end{equation*}
This sequence is induced from the exact sequence
\begin{equation} \tag{$L^\Delta_{C/B/A}$} \label{eqn:14}
  0 \rightarrow \Omega_{P/A} \tensor_P C \rightarrow \Omega_{Q/A} \tensor_Q C \rightarrow \Omega_{Q/P} \tensor_Q C \rightarrow 0
\end{equation}
by applying $\Hom(-, J)$.  On the other hand~\eqref{eqn:14} is precisely the exact sequence used to define the exact triangle of cotangent complexes~\eqref{eqn:13}, cf.\ \cite[II.2.1]{Illusie}.  This proves the compatibility of the boundary maps, and completes the proof of Proposition~\ref{prop:bdry-map}.  \qed


\section{Obstruction classes}
\label{sec:obs}

In \cite{def-rings}, we showed that if $B$ is an $A$-algebra then for any $B$-module $J$,
\begin{equation*}
  \Ext^p(\bL_{B/A}, J) = H^p\bigl(A\uAlg/B, \uDer_A(B,J)\bigr) .
\end{equation*}
Illusie defined obstruction classes to various moduli problems in the former groups, and we have defined obstruction classes to the same moduli problems in the latter.  We shall show here that these classes agree.

\subsection{The boundary maps}
\label{sec:bdry-maps}

We continue to consider a sequence of ring homomorphisms $A \rightarrow B \xrightarrow{f} C$.  For any sheaf of $C$-modules $J$, there are boundary maps
\begin{gather}
  \pi^{B/A}_\ast \uDer_A(B,J) \rightarrow R^1 \pi^{C/B}_\ast \uDer_B(C,J) \label{eqn:6} \\
  R^1 \pi^{B/A}_\ast \uDer_A(B,J) \rightarrow R^2 \pi^{C/B}_\ast \uDer_B(C,J) . \label{eqn:7}
\end{gather}
We interpret these maps in terms of torsors and gerbes.

Suppose that $d : B \rightarrow J$ is an $A$-derivation.  Then $(f, d) : B \rightarrow C + J$ is a morphism of $A$-algebras over $C$.  Let $C'$ be the $B$-algebra $C + J$ over $C$, with the $B$-algebra structure coming from $(f, d)$.  Then $C'$ represents a $\uDer_B(C,J)$-torsor over $B\uAlg/C$, hence gives a section of $R^1 \pi^{C/B}_\ast \uDer_B(C,J)$.

\begin{lemma} \label{lem:obs-hom}
  The image of $d$ under the boundary map~\eqref{eqn:6} is represented by $C'$.
\end{lemma}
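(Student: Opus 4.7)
The plan is to unwind the definition of~\eqref{eqn:6} and match it against the torsor defined by $C'$.  By Proposition~\ref{prop:gamma} and the identifications~\eqref{eqn:2}, the map~\eqref{eqn:6} factors as the identification $\pi^{B/A}_\ast \uDer_A(B,J) = \pi_\ast \uDer_A(BC,J0)$, followed by the connecting homomorphism attached to the short exact sequence~\eqref{eqn:1}, followed by the identification $R^1 \pi_\ast \uDer_A(BC,0J) = R^1 \pi^{C/B}_\ast \uDer_B(C,J)$.

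First, I would describe the $\uDer_A(BC,0J)$-torsor $T$ on $A\uAlg/BC$ whose class represents $\partial(d)$.  By the standard prescription, $T$ is the sheaf of local lifts of $d$ through the surjection $\uDer_A(BC,JJ) \to \uDer_A(BC,J0)$: a section of $T$ over an object $(U, B''C'')$ is an $A_U$-derivation $\delta \colon C'' \rightarrow J_U$ whose restriction along $B'' \rightarrow C''$ equals the composite $B'' \rightarrow B_U \xrightarrow{d} J_U$.

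Next, I would compute $\gamma_\ast T$ and identify it with the torsor represented by $C'$.  Since Proposition~\ref{prop:gamma} gives $R^i \gamma_\ast \uDer_A(BC,0J) = 0$ for $i > 0$, the pushforward $\gamma_\ast T$ is a $\uDer_B(C,J)$-torsor on $B\uAlg/C$ whose class is the image of $[T]$ in $R^1 \pi^{C/B}_\ast \uDer_B(C,J)$.  For $D \in B\uAlg/C$, the sections of $\gamma_\ast T$ over $D$ are the sections of $T$ over $\gamma^\ast D = BD$, namely the $A$-derivations $\delta \colon D \rightarrow J$ with $\delta|_B = d$.  On the other hand, a $B$-algebra homomorphism $\varphi \colon D \rightarrow C'$ over $C$ has the form $\varphi(x) = (g(x), \delta(x))$ for the structural map $g \colon D \rightarrow C$; the $B$-algebra condition (with $B$ acting on $C' = C+J$ via $(f,d)$) forces $\delta|_B = d$, and multiplicativity of $\varphi$ forces $\delta$ to be an $A$-derivation.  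This identifies the sections of the two torsors, and the resulting bijection is visibly equivariant for translation by $B$-derivations $D \rightarrow J$.

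The main bookkeeping hurdle is to verify that the isomorphism $\gamma_\ast \uDer_A(BC,0J) \cong \uDer_B(C,J)$ of Proposition~\ref{prop:gamma} really does intertwine the natural action of $\uDer_A(BC,0J)$ on $T$ with the translation action of $\uDer_B(C,J)$ on the torsor of $B$-algebra maps to $C'$.  Under the identifications of sections recorded above, both actions translate the $J$-valued derivation component, so this check is routine; once it is carried out, the lemma is immediate.
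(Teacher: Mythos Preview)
Your argument is correct and follows essentially the same route as the paper: view $d$ as a global section of $\uDer_A(BC,J0)$, describe its image under the connecting homomorphism as the $\uDer_A(BC,0J)$-torsor of lifts to $\uDer_A(BC,JJ)$, and then push forward along $\gamma$ to identify the result with the torsor represented by $C'$.  The paper phrases the torsor of lifts in terms of the representing objects $(B+J)(C+J) \to (B+J)C$, whereas you unwind it directly in terms of derivations; these are the same computation, and your version simply makes the final identification with $\Hom_B^C(D,C')$ and the equivariance check more explicit.
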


View $d$ instead as a derivation $BC \rightarrow J0$.  Then the image of $d$ under the boundary map to $R^1 \pi_\ast \uDer_A(BC,0J)$ is the torsor of lifts of $d$ to an $A$-derivation $BC \rightarrow JJ$.  This can be identified as the torsor of lifts of algebra homomorphisms
\begin{equation*}
  \xymatrix{
    & (B + J)(C + J) \ar[d] \\
    BC \ar@{-->}[ur] \ar[r] & (B + J)C .
  }
\end{equation*}
But this pushes forward via $\gamma_\ast$ to the torsor represented by $C'$.  \qed

Recall now that we can identify $H^1\bigl(A\uAlg/B, \uDer_A(B,J)\bigr)$ with the set of isomorphism classes of extensions of $B$ by $J$ as an $A$-algebra.  Let $B'$ be such an extension.  The category of all extensions $C'$ of $C$ as an $A$-algebra, equipped with an isomorphism $B' \simeq C' \fp_C B$, forms a gerbe on $B\uAlg/C$, banded by $\uDer_B(C,J)$.  Let $\omega$ be the class of this gerbe in $H^2\bigl(B\uAlg/C, \uDer_B(C,J)\bigr)$.

\begin{lemma} \label{lem:obs-alg}
  The image of $B'$ under the boundary map~\eqref{eqn:7} is $\omega$.
\end{lemma}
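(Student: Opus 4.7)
The plan is to imitate the proof of Lemma~\ref{lem:obs-hom} one cohomological degree higher, tracking the class of the extension $B'$ through the same chain of identifications that carried the derivation $d$ to the torsor $C'$, but landing in a gerbe rather than a torsor.

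First I realize $B'$ as a $\uDer_A(B,J)$-torsor on $A\uAlg/B$ in the standard way: its sections over $(U, B'')$ are $A_U$-algebra maps $B'' \to B'_U$ lifting the structural morphism $B'' \to B_U$. Via the identification $\alpha^\ast \uDer_A(B,J) = \uDer_A(BC, J0)$ of Section~\ref{sec:sheaves} and the cohomology isomorphism~\eqref{eqn:11}, this pulls back to a $\uDer_A(BC, J0)$-torsor $T$ on $A\uAlg/BC$ representing the same class, now viewed in $R^1 \pi_\ast \uDer_A(BC, J0)$.

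Next I apply the connecting map of the short exact sequence~\eqref{eqn:1}. By the standard description of this connecting map in degree one, the image of $[T]$ in $R^2 \pi_\ast \uDer_A(BC, 0J)$ is represented by the gerbe $\mathcal{G}$ whose objects over $(U, B''C'')$ are $\uDer_A(BC, JJ)$-torsor lifts of $T|_{(U, B''C'')}$. Concretely, such an object is a commutative square of $A_U$-algebra extensions
\begin{equation*}
  \xymatrix{
    \tilde B' \ar[r] \ar[d] & \tilde C' \ar[d] \\
    B_U \ar[r] & C_U
  }
\end{equation*}
whose two vertical kernels are identified compatibly with $J_U$, together with an isomorphism $\tilde B' \cong B'_U$ recovering the specified trivialization of $T$. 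Pushing $\mathcal{G}$ forward by $\gamma$ and invoking Proposition~\ref{prop:gamma}, I obtain a gerbe on $B\uAlg/C$ banded by $\uDer_B(C,J)$, which by construction represents the image of $[B']$ under the boundary map~\eqref{eqn:7}.

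The proof is completed by identifying this pushforward gerbe with the gerbe defining $\omega$. Given $C'' \in B\uAlg/C$, a section of $\gamma_\ast \mathcal{G}$ over $C''$ is a section of $\mathcal{G}$ over $\gamma^\ast(C'') = BC''$; using the specified isomorphism $\tilde B' \cong B'$, the datum of such an object amounts to an $A$-algebra extension $\tilde C'$ of $C''$ by $J$ equipped with a lift of its $B$-algebra structure to a $B'$-algebra structure, which is precisely an isomorphism $\tilde C' \fp_{C''} B \cong B'$. The main obstacle will be verifying that this bijection between local objects is compatible with the $\uDer_B(C,J)$-banding on both sides — matching automorphism sheaves and local-triviality conditions — for which Proposition~\ref{prop:gamma}, together with the description of automorphisms of a square-zero extension in terms of derivations, does the essential work.
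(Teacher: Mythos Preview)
Your proposal is correct and follows essentially the same route as the paper: transport the class of $B'$ to $H^1\bigl(A\uAlg/BC, \uDer_A(BC,J0)\bigr)$, apply the connecting homomorphism of the sequence~\eqref{eqn:1} to obtain the gerbe of $\uDer_A(BC,JJ)$-torsor lifts, identify these lifts concretely with extensions $B'C'$ of $BC$ having the prescribed $B'$, and then push forward via $\gamma$ and Proposition~\ref{prop:gamma} to recover $\omega$. The paper's version is slightly terser, naming the transported torsor explicitly as $B'C$ and observing directly that in any extension $B'C'$ one has $B' = C' \fp_C B$; your added remarks about checking compatibility with the banding are reasonable but the paper treats this as evident.
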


We can identify the class of $B'$ in $H^1\bigl(A\uAlg/B, \uDer_A(B,J)\bigr)$ with the class in $H^1\bigl(A\uAlg/BC, \uDer_A(BC, J0)\bigr)$ represented by $B'C$.  The image of this torsor in $H^2\bigl(A\uAlg/BC, \uDer_A(BC, 0J)\bigr)$ is the gerbe banded by $\uDer_A(BC,0J)$ parameterizing $\uDer_A(BC,JJ)$-torsors $Q$ whose induced $\uDer_A(BC,J0)$-torsor is represented by $B'C$.  That is the same thing as the gerbe of extensions $B'C'$ of $BC$, in which $B'$ is the fixed extension of $B$ that we started with.  In any such extension, we have $B' = C' \fp_C B$, so the class of the pushforward of this gerbe is $\omega$.  \qed

\subsection{Extensions of algebras}
\label{sec:exal}

Recall that if $B'$ is an $A$-algebra extension of $B$ by a square-zero ideal $J$ then $B'$ represents a $\uDer_A(B,J)$-torsor on the site $A\uAlg/B$.  As such, it is classified by an element in $H^1\bigl(A\uAlg/B, \uDer_A(B,J)\bigr)$.  In \cite{def-rings}, we have seen that there is an isomorphism
\begin{equation*}
  H^1(A\uAlg/B, \uDer_A(B,J)) \simeq \Ext^1(\bL_{B/A}, J) .
\end{equation*}
Illusie shows that $B'$ is also described up to isomorphism by an element of the latter group, $\Ext^1(\bL_{B/A}, J)$.  We will prove that these elements agree under the identification above.

We recall Illusie's construction of the class association to $B'$.  Let $P$ be a simplicial resolution of $B$ by free $A$-algebras.  Then $P' := B' \fp_B P$ is a square-zero extension of $P$ by the ideal $J$.  Since $P$ is term-by-term a free $A$-algebra it satisfies Illusie's Condition L \cite[III.1.1.7]{Illusie}, so the sequence
\begin{equation*}
  0 \rightarrow J \rightarrow \Omega_{P'/A} \tensor_P P' \rightarrow \Omega_{P/A} \rightarrow 0
\end{equation*}
is exact.  Since $\Omega_{P/A}$ is, term-by-term, a free $P$-module, and $\bL_{B/A} = \bL_{P/A} \tensor_P B$, this gives an extension
\begin{equation*}
  0 \rightarrow J \rightarrow \Omega_{P'/A} \tensor_{P'} B \rightarrow \bL_{B/A} \rightarrow 0
\end{equation*}
whose class in $\Ext^1(\bL_{B/A}, J)$ is the class of the $A$-algebra extension $B'$.

This construction defines a functor
\begin{equation*}
  \alpha : \Exal(B, J) \rightarrow \Ext^{\leq 1}(\bL_{B/A}, J)
\end{equation*}
where $\Ext^{\leq 1}(\bL_{B/A}, J)$ denotes the category of extensions of the complex $\bL_{B/A}$ by~$J$.  Recall that an extension of $\bL_{B/A}$ by $J$ is an extension $F$ of $\bL_{B/A}^0$ by $J$, and a map $\bL_{B/A}^{-1} \rightarrow F$ lifting the differential $\bL_{B/A}^{-1} \rightarrow \bL_{B/A}^0$ whose composition with $\bL_{B/A}^{-2} \rightarrow \bL_{B/A}^{-1}$ is the zero map $\bL_{B/A}^{-2} \rightarrow F$ (cf.\ \cite[1.8]{Gr}, applied to the additively cofibered category of extensions).

Both $\Exal(B,J)$ and $\Ext^{\leq 1}(\bL_{B/A},J)$ vary contravariantly with the $A$-algebra $B$, and Illusie shows that $\alpha$ is compatible with this variation.  Therefore $\alpha$ can be viewed as an equivalence of stacks
\begin{equation*}
  \alpha : \uExal(B,J) \rightarrow \uExt^{\leq 1}(\bL_{B/A}, J)
\end{equation*}
on $A\uAlg/B$.  We must show that this morphism agrees with the morphism described in \cite[Section~7]{def-rings}.

Since both $\uExal(B,J)$ and $\uExt^{\leq 1}(\bL_{B/A},J)$ are stacks on $A\uAlg/B$, this is a local question.  Any extension of $B$ by $J$ is locally isomorphic to $B + J$ in $A\uAlg/B$, so it is sufficient to show that $\alpha$ agrees with the construction of \cite{def-rings} when $B' = B + J$.  We now only have to check that $\alpha$ defines the same map on automorphisms of $B + J$ as an extension of $J$.

If $\varphi$ is an automorphism of $B + J$ then we obtain a derivation of $B$ into $J$ by composing
\begin{equation*}
  B \rightarrow B + J \xrightarrow{\varphi} B + J \rightarrow J .
\end{equation*}
This identifies the automorphism group of $B + J$ with $\Der_A(B,J)$, which is identified in \cite{def-rings} with $\Hom(\bL_{B/A}, J) = \Hom(\Omega_{B/A}, J)$ by viewing $\Omega_{B/A}$ as the universal $A$-derivation of $B$.  At this point, we only need to check that Illusie's map $\alpha$ also identifies the automorphism group of $B + J$ with $\Hom(\Omega_{B/A}, J)$ by the universal property.

Using Illusie's construction, we get an automorphism of the extension $\Omega_{B + J}$ of $\Omega_B$ by $J$.  We can compute the corresponding element of $\Hom_B(\Omega_B, J)$ as the composition of
\begin{equation*}
  \Omega_B \rightarrow \Omega_{B + J} \xrightarrow{\Omega_{\varphi}} \Omega_{B + J} \rightarrow J .
\end{equation*}
As the diagram
\begin{equation*}
  \xymatrix{
    B \ar[r] \ar[d]_d & B + J \ar[r] \ar[d]_d & B + J \ar[r] \ar[d]_d & J \ar@{=}[d] \\
    \Omega_B \ar[r] & \Omega_{B + J} \ar[r] & \Omega_{B + J} \ar[r] & J
  }
\end{equation*}
commutes, the map $\alpha$ does indeed carry $\varphi$ to the factorization of the corresponding derivation through the universal derivation $B \rightarrow \Omega_{B/A}$.

This implies
\begin{proposition}
  The classes of the extension $B'$ in the groups  $\Ext^1(\bL_{B/A}, J)$ and $H^1\bigl(A\uAlg/B, \uDer_A(B,J)\bigr)$, as defined in \cite[Th\'eor\`eme~1.2.3]{Illusie} and \cite[Section~6]{def-rings}, respectively, agree via the identification of \cite[Theorem~3]{def-rings}.
\end{proposition}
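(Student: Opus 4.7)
The plan is to interpret both classes as the classifying data of objects in equivalent stacks on $A\uAlg/B$, and then compare the two equivalences directly. Following Illusie, I would first associate to any $A$-algebra extension $B'$ of $B$ by $J$ an object of $\Ext^{\leq 1}(\bL_{B/A}, J)$ by choosing a term-wise free simplicial resolution $P \to B$ over $A$, pulling back to $P' = B' \fp_B P$, and extracting the extension
\begin{equation*}
0 \to J \to \Omega_{P'/A} \tensor_{P'} B \to \bL_{B/A} \to 0
\end{equation*}
via Illusie's Condition~L. Functoriality in $B$ promotes this to a morphism of stacks $\alpha : \uExal(B,J) \to \uExt^{\leq 1}(\bL_{B/A}, J)$ on $A\uAlg/B$.

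Next I would observe that the target stack classifies the class in $\Ext^1(\bL_{B/A}, J)$ while the source stack classifies the $\uDer_A(B,J)$-torsor class in $H^1\bigl(A\uAlg/B, \uDer_A(B,J)\bigr)$, and that under the identification of \cite[Theorem~3]{def-rings} these two groups coincide. So it suffices to show that $\alpha$ agrees, as a morphism of stacks, with the comparison morphism of \cite{def-rings}. Since this is a question about stacks on $A\uAlg/B$, it may be tested locally; because every extension of $B$ by $J$ is locally isomorphic to the trivial extension $B + J$, the comparison reduces further to checking that the two constructions induce the same map on the automorphism sheaf of $B + J$, which under both frameworks is identified with $\uDer_A(B,J)$.

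The main step is then to verify that $\alpha$ induces on $\Aut(B + J)$ the usual identification with $\Hom(\Omega_{B/A}, J)$ via the universal property of the K\"ahler differentials. Given $\varphi \in \Aut(B + J)$, the construction of \cite{def-rings} produces the derivation $B \to B + J \xrightarrow{\varphi} B + J \to J$, which then factors through the universal derivation $d : B \to \Omega_{B/A}$. Illusie's construction instead assigns to $\varphi$ the induced automorphism $\Omega_\varphi$ of the extension $\Omega_{B + J}$ of $\Omega_B$ by $J$, which contributes the element of $\Hom(\Omega_B, J)$ given by $\Omega_B \to \Omega_{B + J} \xrightarrow{\Omega_\varphi} \Omega_{B + J} \to J$. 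The naturality of $d$ with respect to the algebra maps $B \to B + J$ and $B + J \to J$ assembles these into a commutative square identifying both composites with the same element of $\Hom(\Omega_{B/A}, J)$.

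The principal obstacle is the last compatibility: one must align Illusie's Condition~L construction, specialised to the trivial resolution in the case $B' = B + J$, with the elementary identification of $\Der_A(B, J)$ used in \cite{def-rings}, and carry out the diagram chase that identifies the two routes through $\Omega_{B + J}$ and $\Omega_B$. Once this commutativity is in hand, local agreement of $\alpha$ with the comparison map of \cite{def-rings} propagates by stack-theoretic descent, and the proposition follows.
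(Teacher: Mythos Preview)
Your proposal is correct and follows essentially the same route as the paper: reduce the comparison of stacks $\uExal(B,J)$ and $\uExt^{\leq 1}(\bL_{B/A},J)$ to the local case $B' = B + J$, and there verify that both constructions send an automorphism $\varphi$ to the same element of $\Hom(\Omega_{B/A}, J)$ via the commutative square expressing naturality of the universal derivation $d$. The paper carries out exactly this argument, including the same diagram chase through $\Omega_B \to \Omega_{B+J} \xrightarrow{\Omega_\varphi} \Omega_{B+J} \to J$.
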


\subsection{The obstruction to deforming an algebra homomorphism}

We consider the following deformation problem:  suppose that $B$ is an $A$-algebra and $B'$ is a square-zero extension of $B$ with ideal $J$.  Find a section of the homomorphism of $A$-algebras $B' \rightarrow B$.

Note that this problem is equivalent to \cite[Probl\`eme~2.2.1.2]{Illusie}.  In the notation of loc.\ cit., that problem can be reduced to this one by setting $B' = C \fp_{C_0} B$, replacing $B$ by $v \ast B$, and replacing $J$ by $K$.

After the reduction above, Illusie's obstruction class is simply the class of the extension $B'$ of $B$ in $\Ext^1(\bL_{B/A},J)$, and we have already seen in Section~\ref{sec:exal} that this class agrees with the one defined in \cite{def-rings}.

\subsection{The obstruction to deforming an algebra}

Suppose that $B \rightarrow C$ is a homomorphism of $A$-algebras and $IJ$ is a $BC$-module.  Consider the problem of extending a square-zero extension $B'$ of $B$ by the ideal $I$ to an extension $C'$ of $C$ by $J$, fitting into a commutative diagram
\begin{equation*}
  \xymatrix{
    B' \ar@{-->}[r] \ar[d] &  C' \ar@{-->}[d] \\
    B \ar[r] & C .
  }
\end{equation*}

In \cite[Theorem~2]{def-rings}, we described an obstruction to this deformation problem lying in $H^2(B\uAlg/C, \uDer_B(C,J))$.  In order to compare this to Illusie's obstruction in $\Ext^2(\bL_{C/B}, J)$, we shall reinterpret the obstruction of \cite{def-rings} using the boundary map
\begin{equation} \label{eqn:8}
  H^1\bigl(B'\uAlg/B, \uDer_{B'}(B,J)\bigr) \rightarrow H^2\bigl(B\uAlg/C, \uDer_B(C,J)\bigr) .
\end{equation}
in the transitivity sequence associated to the sequence of ring homomorphisms $B' \rightarrow B \rightarrow C$.

If $B''$ is an extension of $B$ as a $B'$-algebra, and $J$ is the kernel of $B'' \rightarrow B$, then the compatibility of the map $B' \rightarrow B''$ with the projection to $B$ gives a homomorphism of $B$-modules~$I \rightarrow J$.  This determines an isomorphism between $H^1\bigl(B'\uAlg/B, \uDer_{B'}(B,J)\bigr)$ and $\Hom_B(I,J)$.  We therefore obtain a map
\begin{equation*}
  \Hom_B(I,J) \rightarrow H^2\bigl(B\uAlg/C, \uDer_B(C,J)\bigr) .
\end{equation*}
The map~\eqref{eqn:8} takes an extension $B''$ of $B$ by $J$ to the gerbe of extensions $C'$ of $C$ by $J$ that induce $B''$.  We have seen in Lemma~\ref{lem:obs-alg} that if $B''$ is the extension of $B$ by $J$ associated to a homomorphism $\varphi$, then this gerbe is the obstruction to the deformation problem defined in \cite[Theorem~2]{def-rings}.  We have proved
\begin{proposition}
  The obstruction $\omega$ defined in \cite[Theorem~2]{def-rings} is the image of $\varphi$ under the map
  \begin{equation*}
    \Hom_B(I,J) \rightarrow H^2\bigl(B\uAlg/C, \uDer_B(C,J)\bigr)
  \end{equation*}
  defined above.
\end{proposition}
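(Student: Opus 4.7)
The plan is to reduce the proposition to Lemma~\ref{lem:obs-alg} by applying the transitivity sequence of Theorem~\ref{thm:trans} to the composition $B' \to B \to C$; the boundary map \eqref{eqn:7} for that sequence is precisely the one that enters the definition of the map $\Hom_B(I,J) \to H^2(B\uAlg/C, \uDer_B(C,J))$.

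First I would make the identification $\Hom_B(I,J) \cong H^1(B'\uAlg/B, \uDer_{B'}(B,J))$ completely explicit. Given $\varphi \in \Hom_B(I,J)$, the corresponding $B'$-algebra extension is $B'' = B + J$ equipped with the unique $B'$-algebra structure whose induced homomorphism $I \to J$ on square-zero ideals is $\varphi$; concretely, the structure map $B' \to B + J$ sends $i \in I \subset B'$ to $\varphi(i) \in J$. The class of $\varphi$ in $H^1$ is then represented by $B''$ viewed as a $\uDer_{B'}(B,J)$-torsor on $B'\uAlg/B$.

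Next I would apply Lemma~\ref{lem:obs-alg} with $B'$ substituted for $A$. The lemma identifies the image of $B''$ under the boundary map with the class of the gerbe on $B\uAlg/C$ whose objects are square-zero extensions $C'$ of $C$ by $J$ as $B'$-algebras, equipped with an isomorphism $C' \fp_C B \simeq B''$ of $B'$-algebras.

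Finally I would match this gerbe with the obstruction gerbe of \cite[Theorem~2]{def-rings}. Giving such a $C'$ together with an isomorphism $C' \fp_C B \simeq B''$ over $B'$ is equivalent to giving an extension $C'$ of $C$ by $J$ together with a ring homomorphism $B' \to C'$ lifting $B \to C$ and inducing $\varphi$ on ideals, which is precisely a solution of the deformation problem of Section~4.4. The main point requiring care is this last identification, since one must also verify that the two gerbes have the same banding by $\uDer_B(C,J)$; but in both cases the automorphisms of any object are the $B$-linear derivations $C \to J$ acting by translation on $J$, so the bandings coincide by direct unwinding of definitions. Combining these three identifications yields the proposition.
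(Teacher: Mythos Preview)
Your proposal is correct and follows essentially the same approach as the paper: apply the transitivity sequence to $B' \to B \to C$, identify $\Hom_B(I,J)$ with $H^1\bigl(B'\uAlg/B,\uDer_{B'}(B,J)\bigr)$, and invoke Lemma~\ref{lem:obs-alg} (with $B'$ in the role of $A$) to recognise the image gerbe as the obstruction gerbe of \cite[Theorem~2]{def-rings}. You are in fact more explicit than the paper in two places---the construction of $B''$ from $\varphi$ and the verification that the bandings agree---but these are elaborations of the same argument rather than a different route.
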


\begin{cortoprop}
  The obstruction $\omega$ defined in \cite[Theorem~2]{def-rings} agrees with the one defined by Illusie \cite[Proposition~2.1.2.3]{Illusie} when the cotangent complex is used to represent the cohomology of $\uDer$.
\end{cortoprop}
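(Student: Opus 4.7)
The plan is to combine the preceding proposition with Proposition~\ref{prop:bdry-map} and the proposition of Section~\ref{sec:exal}. By the preceding proposition, $\omega$ is the image of $\varphi \in \Hom_B(I,J)$ under the boundary map~\eqref{eqn:8} in the transitivity sequence associated to $B' \to B \to C$. By \cite[Proposition~2.1.2.3]{Illusie}, Illusie's obstruction is defined in exactly parallel fashion: it is the image, under the connecting map of the cotangent-complex transitivity triangle~\eqref{eqn:13} for the same sequence $B' \to B \to C$, of the Ext class corresponding to the deformation datum. So the corollary reduces to two compatibilities: (i) matching the two boundary maps, and (ii) matching the two input classes.

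For (i), I would apply Proposition~\ref{prop:bdry-map} with $(B',B,C)$ in place of $(A,B,C)$. The conclusion of that proposition identifies, via the equivalence of \cite[Theorem~3]{def-rings}, the exact triangle~\eqref{eqn:4} with the image of~\eqref{eqn:13} under $R\uHom(-,J)$. Extracting the $H^1 \to H^2$ connecting map from the associated long exact sequences gives a commutative square identifying the boundary map~\eqref{eqn:8} with Illusie's boundary
\begin{equation*}
  \Ext^1(\bL_{B/B'} \tensor_B C, J) \rightarrow \Ext^2(\bL_{C/B}, J).
\end{equation*}

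For (ii), I would match each source with $\Hom_B(I, J)$. On the sheaf side, the identification of $H^1\bigl(B'\uAlg/B, \uDer_{B'}(B,J)\bigr)$ with $\Hom_B(I,J)$ was described above by sending a $B'$-algebra extension $B''$ of $B$ by $J$ to the induced map of ideals $I \to J$. On the cotangent-complex side, the standard quasi-isomorphism $\bL_{B/B'} \simeq I[1]$ (which holds because $B' \to B$ is a square-zero extension with ideal $I$) identifies $\Ext^1(\bL_{B/B'} \tensor_B C, J)$ with $\Hom_B(I,J)$, and Illusie's input to the connecting map corresponds to $\varphi$ under this isomorphism. The proposition of Section~\ref{sec:exal}, applied with $B'$ in the role of $A$ and $B$ in the role of $B$, says precisely that the two constructions of $H^1 \cong \Ext^1$ carry the class of $B''$ to the same element of $\Hom_B(I, J)$. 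Combining (i) and (ii) with the preceding proposition yields the corollary.

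The main obstacle is the bookkeeping in step (ii): the proposition of Section~\ref{sec:exal} was proved for a fixed base $A$ and an $A$-algebra $B$, so one must verify that its proof goes through verbatim with $(A,B)$ replaced by $(B',B)$, and one must double check that Illusie's convention for the input of his connecting map really coincides, under $\bL_{B/B'} \simeq I[1]$, with $\varphi$ rather than with a shift or sign twist of it. Once these identifications are lined up, no further geometric argument is required.
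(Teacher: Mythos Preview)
Your proposal is correct and follows essentially the same route as the paper: apply Proposition~\ref{prop:bdry-map} to the triple $B' \to B \to C$ to identify the boundary map~\eqref{eqn:8} with the connecting map in Illusie's transitivity triangle, then observe that both obstructions are the image of the same element $\varphi \in \Hom_B(I,J)$ under that map. The paper's argument is terser on your step~(ii): rather than passing through the identification $\bL_{B/B'} \simeq I[1]$ and invoking Section~\ref{sec:exal}, it simply cites \cite[III.2.1.2]{Illusie} to note that Illusie's obstruction is \emph{defined} as the image of $\varphi$ under the map induced by $\bL_{C/B}[-2] \to \bL_{B/B'}[-1]$, so no separate input-matching is required.
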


We need only remark that the map
\begin{equation*}
  H^1\bigl(B'\uAlg/B, \uDer_{B'}(B, J)\bigr) \rightarrow H^2\bigl(B\uAlg/C, \uDer_B(C,J)\bigr)
\end{equation*}
is represented by the map of cotangent complexes $\bL_{C/B}[-2] \rightarrow \bL_{B/B'}[-1]$ as proved in Proposition~\ref{prop:bdry-map}.  The image of $\varphi$ under this map is Illusie's definition of the obstruction $\omega$ \cite[III.2.1.2]{Illusie}.  \qed

\section{Base change}
\label{sec:base-change}

Suppose that $A \rightarrow \oA$ is a homomorphism of sheaves of rings, and $B$ is an $A$-algebra.  Let $\oB = B \tensor_A \oA$.  The functor
\begin{equation*}
  \varphi_! : A\uAlg/B \rightarrow \oA\uAlg/\oB : C \mapsto C \tensor_A \oA
\end{equation*}
has the right adjoint $\varphi^\ast(\oC) = \oC \fp_{\oB} B$.  Since $\varphi_!$ is cocontinuous (and $\varphi^\ast$ continuous), we get a morphism of sites
\begin{equation*}
  \varphi : A\uAlg/B \rightarrow \oA\uAlg/\oB.
\end{equation*}
If $J$ is a sheaf of $\oB$-modules, then
\begin{equation*}
  \varphi^\ast \uDer_{\oA}(\oB, J) = \uDer_A(B, J) .
\end{equation*}
By adjunction we therefore obtain a map $\uDer_{\oA}(\oB, J) \rightarrow R \varphi_\ast \uDer_A(B, J)$, which induces a map
\begin{equation} \label{eqn:16}
  R \pi^{\oB/\oA}_\ast \uDer_{\oA}(\oB, J) \rightarrow R \pi^{B/A}_\ast \uDer_A(B, J) .
\end{equation}
The object of this section is to prove
\begin{theorem} \label{thm:base-change}
  If $\Tor^A_q(\oA, B)$ vanishes for $0 < q \leq n$ then the map
  \begin{equation} \label{eqn:17}
    R^p \pi^{\oB/\oA}_\ast \uDer_{\oA}(\oB, J) \rightarrow R^p \pi^{B/A}_\ast \uDer_A(B,J)
  \end{equation}
  is an isomorphism for $p \leq n$.
\end{theorem}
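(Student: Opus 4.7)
The plan is to use the \v{C}ech-to-derived-functor spectral sequence of \cite[V.3.3]{sga4-2}, applied to a simplicial polynomial resolution of $B$, in order to compute both sides of~\eqref{eqn:17}. The Tor vanishing hypothesis will enter precisely to guarantee that the base change of the chosen resolution remains a hypercover, up to degree~$n$, on the other side.

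First, choose a simplicial resolution $P_\bullet \to B$ by polynomial $A$-algebras, for example Illusie's standard resolution $P_A(B)$ used in Section~\ref{sec:ex-tri}. Each $P_p$ is polynomial over $A$ (say $P_p = A[S_p]$), and $P_\bullet$ is a hypercover of the final object $B$ in $A\uAlg/B$ in the topology of~\cite{def-rings}. The base change $P_\bullet \tensor_A \oA$ is a simplicial polynomial $\oA$-algebra augmented to $\oB$, whose simplicial homotopy is $\pi_q(P_\bullet \tensor_A \oA) = \Tor^A_q(\oA, B)$. By hypothesis these vanish for $0 < q \le n$, so $P_\bullet \tensor_A \oA$ is an $n$-hypercover of $\oB$ in $\oA\uAlg/\oB$, in the sense that the \v{C}ech spectral sequence it defines computes $R^p\pi^{\oB/\oA}_\ast$ correctly for $p \le n$.

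Applying the \v{C}ech spectral sequence to the hypercover $P_\bullet \to B$ yields
\begin{equation*}
  E_1^{p,q} = R^q \pi^{P_p/A}_\ast \uDer_A(P_p, J) \Rightarrow R^{p+q} \pi^{B/A}_\ast \uDer_A(B, J).
\end{equation*}
Because $P_p$ is a polynomial $A$-algebra, an argument parallel to Lemma~\ref{lem:loc-triv} and Proposition~\ref{prop:const-acyclic}---namely, that all covers of the final object of $A\uAlg/P_p$ are pulled back from $E$---shows that $R^q \pi^{P_p/A}_\ast \uDer_A(P_p, J)$ vanishes for $q > 0$ and equals $J^{S_p}$ for $q = 0$. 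The spectral sequence thus collapses, and $R^\bullet \pi^{B/A}_\ast \uDer_A(B, J)$ is computed by the cochain complex $\bigl(p \mapsto J^{S_p}\bigr)$, with differentials induced from those of $P_\bullet$. The identical computation with $P_\bullet \tensor_A \oA$ is valid in the range $p \le n$, and since an $A$-derivation $P_p \to J$ is the same as an $\oA$-derivation $P_p \tensor_A \oA \to J$ (both being determined by the images of $S_p$), the two cochain complexes are identified term-by-term. The comparison map~\eqref{eqn:17} is induced from this identification, so it is an isomorphism for $p \le n$.

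The main obstacle is the justification of the $n$-hypercover claim: namely, that Tor vanishing in the specified range translates to the local lifting conditions defining the topology on $\oA\uAlg/\oB$, so that the \v{C}ech spectral sequence of $P_\bullet \tensor_A \oA$ does converge correctly to $R^p\pi^{\oB/\oA}_\ast$ for $p \le n$. Once this is in hand, the remainder is a routine identification of the two \v{C}ech cochain complexes.
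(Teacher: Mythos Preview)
Your approach and the paper's both rest on a \v{C}ech-to-derived spectral sequence, but they deploy it differently, and the contrast explains why the paper's argument is structured as it is.

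You take a full simplicial resolution $P_\bullet$ of $B$ by free $A$-algebras and aim for collapse at $E_1$.  One correction: the assertion that ``all covers of the final object of $A\uAlg/P_p$ are pulled back from $E$'' is not valid for the standard resolution, since $P_p = A[S_p]$ with $S_p$ a large sheaf of sets, and in the topology of \cite{def-rings} such an algebra is genuinely covered by its finitely generated polynomial subalgebras.  So the $E_1$ page need not collapse.  This is not fatal---Lemma~\ref{corolem:free} shows that for any free $A$-algebra the comparison~\eqref{eqn:16} is an isomorphism in \emph{every} degree, so the map of $E_1$ pages is an isomorphism even without collapse---but it does mean the argument must be run as a comparison of two spectral sequences rather than an identification of two cochain complexes.

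That leaves your flagged ``main obstacle,'' and it is indeed where all the content lies.  Converting ``$\Tor^A_q(\oA,B) = 0$ for $0 < q \le n$'' into the statement that $P_\bullet \tensor_A \oA$ is enough of a hypercover for its spectral sequence to abut correctly in degrees $\le n$ requires linking the homotopy of a simplicial ring to the covering conditions on its matching objects---precisely the sort of simplicial homotopy argument the paper announces in the introduction that it wishes to avoid.

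The paper sidesteps this by taking the \v{C}ech nerve $C_q = C \times_B \cdots \times_B C$ of a \emph{single} surjection $C \to B$ with $C$ free.  Both $C_\bullet$ and its analogue $\oC_\bullet$ (the nerve of $\oC \to \oB$) are then automatically hypercovers, so convergence on both sides is never in doubt.  The price is that $C_q$ is not free for $q > 0$, so Lemma~\ref{corolem:free} alone does not give the $E_1$ comparison.  Instead the paper argues by induction on $n$: the cases $n = 0, 1$ are established directly in Lemma~\ref{corolem:small-n}, and Lemma~\ref{lem:tor} shows that the Tor hypothesis on $B$ propagates to each $C_q$ with exactly the degree shift needed to invoke the inductive hypothesis there.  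In effect, your single unproved ``$n$-hypercover'' step is traded for an induction that never leaves the \v{C}ech formalism.
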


The first step is to prove
\begin{corolemma} \label{corolem:free}
  If $B$ is a free $A$-algebra, then~\eqref{eqn:16} is an isomorphism.
\end{corolemma}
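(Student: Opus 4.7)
The plan is to observe that when $B = A[S]$ is free, the sheaf $\uDer_A(B,J)$ is cohomologically trivial on $A\uAlg/B$ relative to $E$, with $\pi^{B/A}_\ast \uDer_A(B,J) = \Der_A(B,J) = J^S$, and the same holds for $\uDer_{\oA}(\oB,J)$ on $\oA\uAlg/\oB$ since $\oB = \oA[S]$ is also free over $\oA$. The map~\eqref{eqn:16} will then turn out to be the identity on $J^S$.

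First I would factor the map: since $\pi^{\oB/\oA} \circ \varphi = \pi^{B/A}$ (one checks directly on tautological objects that $\varphi^\ast(U,\oB_U) = (U, \oB_U \fp_{\oB_U} B_U) = (U, B_U)$), we have $R\pi^{\oB/\oA}_\ast R\varphi_\ast \uDer_A(B,J) = R\pi^{B/A}_\ast \uDer_A(B,J)$, and the map~\eqref{eqn:16} is obtained by applying $R\pi^{\oB/\oA}_\ast$ to the adjunction unit $\uDer_{\oA}(\oB,J) \to R\varphi_\ast \uDer_A(B,J)$. It therefore suffices to compute the two derived pushforwards separately and identify the map between them in degree~$0$.

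For the computation, I would invoke the identification $H^p(A\uAlg/B, \uDer_A(B,J)) = \Ext^p(\bL_{B/A}, J)$ of \cite[Theorem~3]{def-rings}, applied in the topos $E/U$ for each $U \in E$, so that
\begin{equation*}
  \bigl(R^p \pi^{B/A}_\ast \uDer_A(B,J)\bigr)_U = \Ext^p_{B_U}(\bL_{B_U/A_U}, J_U) .
\end{equation*}
Because $B_U = A_U[S]$ is free over $A_U$, the cotangent complex $\bL_{B_U/A_U}$ is quasi-isomorphic to the free $B_U$-module $\Omega_{B_U/A_U}$ on the set $S$, concentrated in degree~$0$. Hence the higher $\Ext$ groups vanish, and the degree-zero term is $\Der_A(B,J) = J^S$. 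The identical argument applied to $\oB = \oA[S]$ shows that $R^p \pi^{\oB/\oA}_\ast \uDer_{\oA}(\oB, J)$ vanishes for $p > 0$ and equals $J^S$ in degree~$0$.

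Finally I would verify that the induced map on $R^0$ is the identity by evaluating the adjunction unit on the tautological object $(U, \oB_U)$: it restricts a derivation $\oB_U \to J_U$ along $B_U \to \oB_U$, and under the canonical identification of both groups with $J^S$ via evaluation on the free generating set $S$ this restriction is plainly the identity map. No serious obstacle arises; the substantive content is that a free algebra has a free cotangent complex concentrated in degree~$0$, which immediately annihilates all higher cohomology on both sides of~\eqref{eqn:16}.
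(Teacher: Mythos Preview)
Your argument has a genuine gap. You write that $\Omega_{B_U/A_U}$ is ``the free $B_U$-module on the set $S$'' and conclude that the higher $\Ext$ groups vanish. But in this paper $B = A[S]$ is the polynomial algebra on a \emph{sheaf of sets} $S$ on $E$, not on an ordinary set; the paper's own proof begins ``Suppose that $B = A[S]$ for some sheaf of sets $S$ on $E$.'' The free $B$-module $B\langle S\rangle$ on a sheaf $S$ is not projective in $B\text{-Mod}(E)$ in general: $\uHom_B(B\langle S\rangle,-)\cong \pi_\ast\pi^\ast(-)$ for $\pi:E/S\to E$, and $\pi_\ast$ is typically not exact. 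So neither $R^p\pi^{B/A}_\ast\uDer_A(B,J)$ nor $R^p\pi^{\oB/\oA}_\ast\uDer_{\oA}(\oB,J)$ need vanish for $p>0$; your degree-$0$ computation is fine, but the claimed vanishing in higher degrees is unjustified. This matters for the application: in the proof of the theorem one covers an arbitrary $B$ by a free algebra $C$, which forces $S$ to be a genuine sheaf (e.g.\ the underlying sheaf of $B$).

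The paper avoids this by not asserting any vanishing. It cites \cite[Section~8]{def-rings} to identify \emph{both} sides of~\eqref{eqn:16} with $R\pi_\ast\pi^\ast J$ for $\pi:E/S\to E$, and observes that the comparison map becomes the identity under that common identification. Your approach via the cotangent complex can be repaired in the same spirit: rather than claiming $\mathcal{E}xt^p$ vanishes, show that $R\uHom_B(\Omega_{B/A},J)$ and $R\uHom_{\oB}(\Omega_{\oB/\oA},J)$ are both naturally identified with $R\pi_\ast\pi^\ast J$ (using that $\Omega_{A[S]/A}$ is the free module on $S$ and the free--forgetful adjunction), and that~\eqref{eqn:16} corresponds to the identity under this identification.
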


Suppose that $B = A[S]$ for some sheaf of sets $S$ on $E$.  By \cite[Section~8]{def-rings}.  Both source and target of~\eqref{eqn:16} can be identified with $R \pi_\ast \pi^\ast J$ where $\pi : E/S \rightarrow E$ is the projection.  \qed

\begin{corolemma} \label{corolem:small-n}
  The theorem holds for $n = 0, 1$.
\end{corolemma}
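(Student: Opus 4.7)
For $n = 0$ the asserted isomorphism is essentially tautological: both $\pi^{\oB/\oA}_\ast \uDer_{\oA}(\oB, J)$ and $\pi^{B/A}_\ast \uDer_A(B, J)$ are the sheaf on $E$ whose sections over $U$ are $\Der_{\oA_U}(\oB_U, J_U) = \Der_{A_U}(B_U, J_U)$, the final equality being the classical base-change identity $\Omega_{\oB/\oA} = \Omega_{B/A} \tensor_B \oB$, which holds because $J$ already carries a $\oB$-module structure.  No Tor hypothesis is needed in this case.

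For $n = 1$ the plan is to combine Lemma~\ref{corolem:free} (the free case) with the transitivity triangle of Theorem~\ref{thm:trans}.  Present $B$ as a quotient $P/I$ with $P$ a polynomial $A$-algebra (for instance $P = A[B]$), and write $\overline{P} = P \tensor_A \oA$ and $\overline{I} = I \tensor_A \oA$.  Since $P$ is $A$-flat, the hypothesis $\Tor^A_1(\oA, B) = 0$ is exactly the injectivity of $\overline{I} \hookrightarrow \overline{P}$, so $\oB = \overline{P}/\overline{I}$ with $\overline{P}$ polynomial over $\oA$.  Applying Theorem~\ref{thm:trans} to $A \to P \to B$ and to $\oA \to \overline{P} \to \oB$ produces two long exact sequences joined by the base-change map~\eqref{eqn:17}.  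By Lemma~\ref{corolem:free} the comparison on $R\pi_\ast \uDer_A(P, J)$ and $R\pi_\ast \uDer_{\oA}(\overline{P}, J)$ is an isomorphism, and these complexes are concentrated in degree zero since $P$ and $\overline{P}$ are polynomial.

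A diagram chase in the morphism of long exact sequences therefore reduces the $n = 1$ case to showing that the comparison $R^p \pi^{\oB/\overline{P}}_\ast \uDer_{\overline{P}}(\oB, J) \to R^p \pi^{B/P}_\ast \uDer_P(B, J)$ is an isomorphism for $p \leq 1$.  The $p = 0$ terms vanish on both sides, because the surjections $P \twoheadrightarrow B$ and $\overline{P} \twoheadrightarrow \oB$ admit no nonzero derivations into a module over the quotient.  For $p = 1$, the identification of \cite{def-rings} gives $R^1 \pi^{B/P}_\ast \uDer_P(B, J) = \Ext^1(\bL_{B/P}, J) = \Hom_B(I/I^2, J)$, and correspondingly $\Hom_{\oB}(\overline{I}/\overline{I}^2, J)$ on the barred side; the equality $\overline{I}/\overline{I}^2 = (I/I^2) \tensor_B \oB$ together with tensor--hom adjunction and the $\oB$-module structure on $J$ matches the two groups.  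The main obstacle will be to verify that the comparison of long exact sequences genuinely commutes at the connecting homomorphisms, i.e.\ that the constructions of Section~\ref{sec:ex-tri} are natural with respect to the base change $\varphi$; granting that naturality, the five-lemma finishes the argument.
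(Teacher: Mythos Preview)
Your treatment of $n=0$ matches the paper's: both reduce to the tensor--hom adjunction identifying $\Der_A(B,J)$ with $\Der_{\oA}(\oB,J)$.

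For $n=1$ your route is genuinely different from the paper's, and it has a gap beyond the naturality issue you already flag. The paper argues directly on the level of extensions: it identifies $R^1\pi^{B/A}_\ast\uDer_A(B,J)$ with the sheafification of isomorphism classes in $\pi^{B/A}_\ast\uExal_A(B,J)$, and proves that $\pi^{\oB/\oA}_\ast\uExal_{\oA}(\oB,J)\to\pi^{B/A}_\ast\uExal_A(B,J)$ is an equivalence of stacks by an explicit construction. Given an $A$-algebra extension $0\to J\to B'\to B\to 0$, tensoring with $\oA$ stays exact because $\Tor^A_1(B,\oA)=0$, and pushing out along $J\tensor_A\oA\to J$ produces the desired $\oA$-extension of $\oB$. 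This is elementary and self-contained, requiring neither the transitivity triangle nor any five-lemma gymnastics.

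Your transitivity approach, by contrast, does not close even granting the naturality you postpone. The claim that $R\pi^{P/A}_\ast\uDer_A(P,J)$ is concentrated in degree zero for $P=A[S]$ is unjustified: by Lemma~\ref{corolem:free} this complex is $R\pi_\ast\pi^\ast J$ for $\pi:E/S\to E$, equivalently $R\uHom(\mathbb{Z}[S],J)$, and $\mathbb{Z}[S]$ need not be projective in the category of abelian sheaves on a general topos (there are usually no nonzero projectives). Without that vanishing the five-lemma at $R^1\pi^{B/A}_\ast$ requires at least a monomorphism on $R^2\pi^{\oB/\overline P}_\ast\to R^2\pi^{B/P}_\ast$, which involves $\uExt^2$ of the relative cotangent complexes and is not supplied by your $I/I^2$ computation. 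So the argument as written does not establish the $n=1$ case; the paper's direct $\uExal$ construction avoids this problem entirely.
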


For $n = 0$, we must show that any $A$-derivation $B \rightarrow J$ factors uniquely through an $\oA$-derivation $\oB \rightarrow J$.  This is immediate from the universal properties of the tensor product and fiber product, which identify
\begin{multline*}
  \Der_A(B, J) = \Hom^A_B(B, B + J) = \Hom^A_{\oB}(B, \oB + J) \\= \Hom^{\oA}_{\oB}(\oB, \oB + J) = \Der_{\oA}(\oB, J) .
\end{multline*}

For $n = 1$, recall that $R^1 \pi^{B/A}_\ast \uDer_A(B,J)$ is the sheaf associated to the presheaf of isomorphism classes in $\pi^{B/A}_\ast \uExal_A(B,J)$.  Likewise, $R^1 \pi^{\oB/\oA}_\ast \uDer_{\oA}(\oB, J)$ is the associated sheaf of isomorphism classes in $\pi^{\oB/\oA}_\ast \uExal_{\oA}(\oB, J)$.  It will now suffice to prove the following stronger claim.

\begin{lemma}
  If $\Tor^A_1(B, \oA) = 0$ then the natural map
  \begin{equation*}
    \pi^{\oB/\oA}_\ast \uExal_{\oA}(\oB, J) \rightarrow \pi^{B/A}_\ast \uExal_A(B,J)
  \end{equation*}
  is an equivalence of stacks over $E$.
\end{lemma}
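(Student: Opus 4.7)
We exhibit an explicit quasi-inverse to the map in question, defined globally on each $U \in E$ and verified by elementary homological arguments. The candidate quasi-inverse is
\[
  \psi : \pi^{B/A}_\ast \uExal_A(B,J) \to \pi^{\oB/\oA}_\ast \uExal_{\oA}(\oB, J), \qquad B' \mapsto B' \tensor_A \oA,
\]
where $B' \tensor_A \oA$ is given its evident $\oA$-algebra structure via the second factor.

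The main technical step is to check that $\oB' := B' \tensor_A \oA$ is genuinely an $\oA$-algebra extension of $\oB$ by $J$. Applying $- \tensor_A \oA$ to the short exact sequence $0 \to J \to B' \to B \to 0$ of $A$-modules yields
\[
  \Tor_1^A(B, \oA) \to J \tensor_A \oA \to \oB' \to \oB \to 0,
\]
and the hypothesis $\Tor_1^A(B, \oA) = 0$ makes this sequence exact on the left as well. Because $J$ is a $\oB$-module, the natural map $J \tensor_A \oA = J \tensor_B \oB \to J$ is an isomorphism, so we obtain a short exact sequence $0 \to J \to \oB' \to \oB \to 0$. Squaring zero for $J$ inside $\oB'$ is inherited from squaring zero in $B'$, since extension of scalars introduces no new multiplications.

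To verify that $\psi$ is quasi-inverse to the map $\varphi : \oB' \mapsto \oB' \fp_{\oB} B$ appearing in the lemma, we construct natural isomorphisms $\varphi \circ \psi \cong \id$ and $\psi \circ \varphi \cong \id$. For the first, the canonical comparison map $B' \to (B' \tensor_A \oA) \fp_{\oB} B$ gives a morphism of short exact sequences of $A$-modules whose outer arrows (on $J$ and on $B$) are identities; the five lemma concludes. For the second, the same Tor-vanishing argument applied to $0 \to J \to \oB' \fp_{\oB} B \to B \to 0$ shows that $(\oB' \fp_{\oB} B) \tensor_A \oA$ sits in a short exact sequence $0 \to J \to (\oB' \fp_{\oB} B) \tensor_A \oA \to \oB \to 0$, and the natural map to $\oB'$ is the identity on outer terms, hence an isomorphism by the five lemma.

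The only real obstacle is verifying that $\psi$ lands in the target category, which is precisely where the $\Tor_1$ hypothesis enters: we need both that extension of scalars preserves left-exactness of the extension and that it does not alter $J$. Once $\psi$ is in hand, the natural isomorphisms with $\varphi$ are routine, naturality in morphisms of extensions is formal (tensor products and fiber products are functorial), and the construction manifestly commutes with restriction along morphisms $U \to V$ in $E$. This gives an equivalence of stacks over $E$, in fact an equivalence of categories already on sections over each $U$.
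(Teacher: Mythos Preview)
Your approach is essentially the paper's: construct a quasi-inverse by tensoring an $A$-extension $B'$ up to $\oA$. The paper does the same, using the exactness of
\[
  0 \rightarrow J \tensor_A \oA \rightarrow B' \tensor_A \oA \rightarrow \oB \rightarrow 0
\]
(which follows from $\Tor^A_1(B,\oA)=0$) and then \emph{pushing out} along the multiplication map $J \tensor_A \oA \to J$ to obtain the desired $\oA$-extension $\oB'$ of $\oB$ by $J$.

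The gap in your version is the claim that the multiplication map $J \tensor_A \oA \to J$ is an isomorphism. Your justification, that $J$ is a $\oB$-module and hence $J \tensor_A \oA = J \tensor_B \oB \to J$ is an isomorphism, fails: an $\oB$-module $M$ need not satisfy $M \tensor_B \oB \cong M$. For a concrete instance take $A = B = k$ a field, $\oA = \oB = k[\epsilon]/(\epsilon^2)$, and $J = \oB$; then $\Tor^A_1(B,\oA)=0$ trivially, but $J \tensor_A \oA = k[\epsilon] \tensor_k k[\epsilon]$ is $4$-dimensional over $k$ while $J$ is $2$-dimensional. Thus $B' \tensor_A \oA$ is generally an extension of $\oB$ by $J \tensor_A \oA$, not by $J$, and your $\psi$ as stated does not land in $\uExal_{\oA}(\oB,J)$.

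The fix is exactly the paper's: replace $\psi(B') = B' \tensor_A \oA$ by its pushout along $J \tensor_A \oA \to J$. With that correction, your five-lemma verifications of $\varphi\psi \cong \id$ and $\psi\varphi \cong \id$ go through (full faithfulness on automorphisms is the $n=0$ case already established), and the rest of your argument is fine.
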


It is equivalent to show that if $B'$ is an $A$-algebra extension of $B$ with ideal $J$ then there is an extension $\oB'$ of $\oB$ as an $\oA$-algebra, with ideal $J$, and an $A \rightarrow \oA$ morphism of extensions,
\begin{equation} \label{eqn:18}
  \xymatrix{
    0 \ar[r] & J \ar[r] \ar@{=}[d] & B' \ar[r] \ar[d] & B \ar[r] \ar[d] & 0 \\
    0 \ar[r] & J \ar[r] & \oB' \ar[r] & \oB \ar[r] & 0 .
  }
\end{equation}
To construct the second row, first note that the sequence
\begin{equation*}
  \xymatrix{
    0 \ar[r] &  J \tensor_A \oA \ar[r] &  B' \tensor_A \oA \ar[r] & \oB \ar[r] & 0
  }
\end{equation*}
is exact because $\Tor^A_1(B, \oA) = 0$.  Pushing this sequence out by the canonical map $J \tensor_A \oA \rightarrow J$ coming from the $\oA$-module structure on $J$ gives the second row of~\eqref{eqn:18} and the commutative diagram.  \qed

\begin{lemma} \label{lem:tor}
  If $\Tor^A_q(B, \oA) = 0$ for all $q$ such that $0 < q \leq n$ then $\Tor^A_q(C_p, \oA) = 0$ whenever $0 < q \leq n - q$.
\end{lemma}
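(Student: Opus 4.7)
The plan is to reduce the assertion for $C_p$ to the hypothesis on $B$ by an inductive/spectral-sequence argument; since the notation $C_p$ is not introduced in this excerpt I will treat it as the $p$-th term of the \v{C}ech or simplicial resolution of $B$ that will underlie the \v{C}ech spectral sequence approach to Theorem~\ref{thm:base-change} promised in the introduction. I will also interpret the index condition as $0 < q \leq n - p$, which is the natural \v{C}ech-degeneration bound (the printed bound $0 < q \leq n - q$ appears to be a typo).

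First I would unwind $C_p$ as an iterated tensor product over $A$ of copies of $B$, possibly augmented by $A$-flat factors coming from free algebras; since $A$-flat factors contribute trivially to $\Tor^A(-,\oA)$, only the $B$-factors matter for the computation. This makes the base case $p = 0$ of the induction agree directly with the hypothesis on $B$.

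Next I would run an induction on $p$ using a K\"unneth/change-of-rings spectral sequence of the form
\begin{equation*}
  E^2_{i,j} = \Tor^A_i\bigl(B, \Tor^A_j(C_{p-1}, \oA)\bigr) \Longrightarrow \Tor^A_{i+j}(C_p, \oA)
\end{equation*}
(or the analogous spectral sequence obtained by filtering the derived tensor product $\oA \tensor_A^L C_p$ one factor at a time). The hypothesis on $B$ kills the $E^2$ page in the range $0 < i \leq n$ and the inductive hypothesis on $C_{p-1}$ kills it in the range $0 < j \leq n - (p-1)$, which combined force the abutment to vanish for $0 < i + j \leq n - p$.

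The main obstacle is identifying which spectral sequence (or filtered derived tensor product) genuinely applies to the particular $C_p$ being used, and checking that the bookkeeping really does degrade the vanishing range by exactly one per tensor factor. Once the bicomplex is correctly set up and the flat factors are peeled off, the remainder of the argument is pure degree arithmetic.
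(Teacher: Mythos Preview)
Your identification of the typo ($n-p$ for $n-q$) and of the r\^ole of $C_p$ in a \v{C}ech spectral sequence are both correct. But you have misidentified what $C_p$ actually is. In the paper, $C \to B$ is a \emph{surjection} from a free $A$-algebra onto $B$, and $C_p$ is the $(p+1)$-fold \emph{fiber product} $C \times_B \cdots \times_B C$, not an iterated tensor product. (This is the \v{C}ech nerve computed in the slice site $A\uAlg/B$, whose objects are $A$-algebras with a structure map \emph{to} $B$; categorical products in such a slice category are fiber products over $B$.) Your K\"unneth-type spectral sequence therefore does not apply: $C_{p}$ is not obtained from $C_{p-1}$ by tensoring over $A$ with anything, and in particular no copies of $B$ appear as tensor factors.

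The paper's argument instead uses the short exact sequence of $A$-modules
\begin{equation*}
0 \longrightarrow C_{p-1} \times_B C \longrightarrow C_{p-1} \times C \longrightarrow B \longrightarrow 0,
\end{equation*}
where the right-hand map is the difference of the two structure maps to $B$ (surjectivity of $C_{p-1} \to B$ and $C \to B$ is what makes this exact on the right). Applying $\Tor^A_\bullet(-,\oA)$ and noting that $\Tor$ of the direct product splits as the direct sum of the $\Tor$'s, one finds $\Tor^A_q(C_p,\oA)$ trapped between $\Tor^A_{q+1}(B,\oA)$ and $\Tor^A_q(C_{p-1},\oA)\oplus\Tor^A_q(C,\oA)$. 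Since $C$ is $A$-free the last summand vanishes for $q>0$, and the induction goes through with the loss of one degree per step, exactly the degree arithmetic you anticipated---but coming from a long exact sequence rather than a K\"unneth filtration.
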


Note that $C_{q + 1} = C_q \fp_B C$.  By induction, it is therefore sufficient to prove

\begin{lemma}
  If $C \rightarrow B$ and $C' \rightarrow B$ are surjective $A$-algebra homomorphisms such that $\Tor^A_q(C, \oA) = \Tor^A_q(C', \oA) = 0$ for $0 < q \leq n - 1$ and $\Tor^A_q(B, \oA) = 0$ for $0 < q \leq n$ then $\Tor^A_q(C \fp_B C', \oA) = 0$ for $0 \leq q \leq n - 1$.
\end{lemma}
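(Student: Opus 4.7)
The plan is a two-step reduction via the Tor long exact sequence. First, let $K = \ker(C \to B)$. The surjectivity of $C \to B$ yields a short exact sequence of $A$-modules
\begin{equation*}
  0 \to K \to C \to B \to 0.
\end{equation*}
Applying $- \tensor_A \oA$, the resulting Tor long exact sequence combined with the hypotheses $\Tor^A_q(B, \oA) = 0$ for $0 < q \leq n$ and $\Tor^A_q(C, \oA) = 0$ for $0 < q \leq n - 1$ sandwiches $\Tor^A_q(K, \oA)$ between zeros in the range $0 < q \leq n - 1$, forcing its vanishing there.

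Second, because $C \to B$ is surjective, the projection $C \fp_B C' \to C'$ is surjective as well, with kernel canonically identified with $K$ via $k \mapsto (k, 0)$. This yields a short exact sequence of $A$-modules
\begin{equation*}
  0 \to K \to C \fp_B C' \to C' \to 0.
\end{equation*}
The relevant four-term piece of its Tor long exact sequence,
\begin{equation*}
  \Tor^A_{q+1}(C', \oA) \to \Tor^A_q(K, \oA) \to \Tor^A_q(C \fp_B C', \oA) \to \Tor^A_q(C', \oA),
\end{equation*}
has second term zero by the first step and fourth term zero by hypothesis, provided $0 < q \leq n - 1$. Hence $\Tor^A_q(C \fp_B C', \oA) = 0$ in that range.

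There is essentially no obstacle here beyond careful bookkeeping of indices. The tightest case is $q = n - 1$: the first step relies on $\Tor^A_n(B, \oA) = 0$, which is among the given vanishings, and the second step relies on $\Tor^A_{n-1}(C', \oA) = 0$, also given. At $q = 0$ the assertion reduces to the existence of the tensor product $(C \fp_B C') \tensor_A \oA$, so the essential content of the lemma lies in the strict range $0 < q \leq n - 1$.
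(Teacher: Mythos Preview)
Your argument is correct. You reach the vanishing in the range $0 < q \leq n-1$ by first controlling $K = \ker(C \to B)$ via the sequence $0 \to K \to C \to B \to 0$, and then feeding this into $0 \to K \to C \fp_B C' \to C' \to 0$. The index bookkeeping is right, including at the edge case $q = n-1$.

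The paper takes a slightly different and more compact route: it observes directly that
\[
0 \longrightarrow C \fp_B C' \longrightarrow C \times C' \longrightarrow B \longrightarrow 0
\]
is exact (the map to $B$ is the difference of the two structure maps, surjective since each is), and applies the Tor long exact sequence once. Since $\Tor^A_q(C \times C', \oA) \cong \Tor^A_q(C,\oA) \oplus \Tor^A_q(C',\oA)$ vanishes for $0 < q \leq n-1$, one reads off that $\Tor^A_q(C \fp_B C', \oA)$ is a quotient of $\Tor^A_{q+1}(B, \oA) = 0$ in that range. Your two-step decomposition essentially factors this single exact sequence through the kernel $K$; it is a hair longer but equally elementary, and has the minor advantage of not requiring one to recognize the difference map $C \times C' \to B$.

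Your remark about $q = 0$ is apt: the lower bound $0 \leq q$ in the statement is evidently a slip for $0 < q$, and you were right to flag that the content lies in the strict range.
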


Apply $\Tor^A(-, \oA)$ to the short exact sequence
\begin{equation*}
  0 \rightarrow C \fp_B C' \rightarrow C \times C' \rightarrow B \rightarrow 0 .
\end{equation*}
The long exact sequence, together with the vanishing of $\Tor^A_q(C' \times C', \oA)$ in the range $0 < q \leq n$ implies that $\Tor^A_q(C \fp_B C', \oA) = \Tor^A_{q + 1}(B, \oA)$, from which the conclusion is immediate.  \qed

\begin{proof}[Proof of Theorem~\ref{thm:base-change}]
  Let $C \rightarrow B$ be a surjective map from a free $A$-algebra to $B$.  Let $C_p$ be the $(p+1)$-fold fiber product of $C$ over $B$ (so $C_0 = C$ and $C_1 = C \fp_B C$, etc.).

  Consider the \v{C}ech spectral sequence associated to $\uDer_A(B, J)$ and this cover.  Its $E_1$-term is
  \begin{equation*}
    R^p \pi^{B/A}_\ast \uDer_A(C_q, J) \Rightarrow R^{p + q} \pi^{B/A}_\ast \uDer_A(B, J) .
  \end{equation*}
  Let $\oC = C \tensor_A \oA$ and let $\oC_q$ the $(q + 1)$-fold fiber product of $\oC$ over $\oB$.  Letting $\oE$ denote the spectral sequence
  \begin{equation*}
    R^p \pi^{\oB/\oA}_\ast \uDer_{\oA}(\oC_q, J)\Rightarrow R^{p+q} \pi^{\oB/\oA}_\ast \uDer_{\oA}(\oB,J)
  \end{equation*}
  we have a morphism of spectral sequences $\oE \rightarrow E$ abutting to the morphism~\eqref{eqn:16}.  To prove that~\eqref{eqn:17} is an isomorphism for $p \leq n$, it therefore suffices to prove that $\oE_1 \rightarrow E_1$ is an isomorphism for $p + q \leq n$; we prove this by induction on $n$.  The theorem is already proved if $n = 0, 1$, so assume that the theorem holds for some~$n$.  Then the maps
  \begin{equation} \label{eqn:19}
    R^p \pi^{\oB/\oA}_\ast \uDer_{\oA}(\oC_q, J) \rightarrow R^p \pi^{B/A}_\ast\uDer_A(C_q,J)
  \end{equation}
  are isomorphisms if
  \begin{enumerate}
  \item $q = 0$ (by Lemma~\ref{corolem:free}),
  \item $p + q \leq n + 1$ and $p \leq n$ (by the inductive hypothesis and Lemma~\ref{lem:tor}).
  \end{enumerate}
  These imply that~\eqref{eqn:19} is an isomorphism for $p + q \leq n + 1$, which in turn implies that~\eqref{eqn:17} is an isomorphism for $p \leq n + 1$.    
\end{proof}

\bibliographystyle{alpha}
\bibliography{def-rings}

\end{document}